\theoremstyle{plain}
\newtheorem{theorem}{Theorem}
\newtheorem{lemma}{Lemma}
\theoremstyle{definition}
\newtheorem{definition}{Definition}
\newcommand{\IL}{\ensuremath{\mathsf{IL}}}
\newcommand{\GL}{\ensuremath{\mathsf{GL}}}
\newcommand{\ILF}{\ensuremath{\mathsf{ILF}}}
\newcommand{\ILC}{\ensuremath{\mathsf{IL_0}}}
\newcommand{\share}[1]{\mu(\hbox{#1})}
\newcommand{\param}[1]{\lambda(\hbox{#1})}
\begin{document}

\title[On the share of closed \IL\ formulas which are also in \GL]{\hbox{On the share of closed \IL\ formulas which are also in \GL}}

\author{Vedran \v{C}a\v{c}i\'{c}}
\address{Vedran \v{C}a\v{c}i\'{c}, University of Zagreb, Department of Mathematics, Bijeni\v{c}ka cesta 30, 10000 Zagreb, Croatia}
\email{veky@math.hr}

\author{Vjekoslav Kova\v{c}}
\address{Vjekoslav Kova\v{c}, University of Zagreb, Department of Mathematics, Bijeni\v{c}ka cesta 30, 10000 Zagreb, Croatia}
\email{vjekovac@math.hr}

\begin{abstract}
Normal forms for wide classes of closed \IL\ formulas were given in \cite{MC}. Here we quantify asymptotically, in exact numbers, how wide those classes are. As a consequence, we show that the ``majority'' of closed \IL\ formulas have \GL-equivalents, and by that, they have the same normal forms as \GL\ formulas. Our approach is entirely syntactical, except for applying the results of \cite{MC}. As a byproduct we devise a convenient way of computing asymptotic behaviors of somewhat general classes of formulas given by their grammar rules. Its applications do not require any knowledge of the recurrence relations, generating functions, or the asymptotic enumeration methods, as all these are incorporated into two fundamental parameters.
\end{abstract}

\subjclass[2010]{Primary 03F05; Secondary 05A16}
\keywords{interpretability logic; normal forms; asymptotic enumeration}

\maketitle

\section{Motivation}
G\"{o}del's incompleteness theorems were a big breakthrough for mathematical logic. With time, logicians started to wonder how they can be generalized, and what else, based on some simple facts we knew, could be deduced about provability predicates. Formalizing provability over some base theory $T$ as a unary modal operator $\Box$ led to the theory \GL\ (named after G\"{o}del and L\"{o}b), which we know today as the provability logic of many base theories.

Provability is a valuable tool for judging absolute strength of some formula against a theory, but what about its relative strength? For some base theory $T$ and two formulas $F$ and $G$ we can ask if $T+F$ is interpretable in $T+G$. In other words: can we find a way of reinterpreting symbols of $T$, and restricting quantifiers using some definable predicate, preserving provability of whole $T$, but such that (reinterpreted) formula $F$ becomes a theorem if we add $G$ as an axiom? Here we do not just divide formulas into black and white, but try to order them in various shades of gray. In fact, various colors would be a better analogy, since the ordering is usually not total.

We can obtain logics of interpretability in a manner quite analogous to provability logic. Formalizing in\-ter\-pre\-ta\-bi\-li\-ty in the above sense as a binary modal operator $\rhd$, we are led to various interpretability logics, most basic of which is probably \IL. Many detailed definitions and explanations concerning \IL\ can be found in~\cite{overview}. Unfortunately, \IL\ itself is, unlike \GL, just a ``nice minimal fragment'' of those interpretability logics, and different base theories add to \IL\ different principles of interpretability, extending it in diverse ways. The intersection of interpretability logics of various ``reasonable'' arithmetical theories is still searched for; see~\cite{Goris} for some results.

However, we can still consider properties of \GL\ and ask ourselves if \IL\ has something analogous. One well-known property of \GL\ is that its closed formulas have very regular normal forms: Every \GL\ formula without variables is equivalent to a Boolean combination of formulas $\bot$, $\Box\bot$, $\Box\Box\bot$, and so on. That Boolean combination can be further normalized, taking into account that $\Box^n\bot\rightarrow\Box^m\bot$ whenever $n\le m$. See~\cite{GLtrace} for details.

Do \IL\ formulas have something similar? Even before this question, we are faced with another one: What are the basic blocks in the closed fragment of \IL? In \GL\ it was easy --- repeating $\Box$ before $\bot$ gives a natural single-parameter countable family acting as ``propositional variables'' and these terms are to be connected into Boolean combinations. The only thing analogous in \IL\ is that same family. Namely, it is easily seen that $\Box$ can be emulated in \IL, $\Box A$ being equivalent to $\neg A\rhd\bot$. Note that $\bot$ is invariant under interpretation and $T+\neg A$ is inconsistent iff $T\vdash A$. Thus the same family $\{\Box^n\bot\,:\,n\in\mathbb N\}$ is also available in \IL.

Some strengthenings of \IL\ actually have the property that all their closed formulas have \GL-equivalents, which particularly implies that they have the same normal forms as \GL\ formulas. For example, that property is proved for any logic containing \ILF\ in~\cite{ILFnormalform}. However, this is \emph{not} the case with plain \IL. In \cite{MC} we showed that many, but not all, \IL\ formulas have \GL-equivalents. Here we count those formulas and find their exact share in the whole closed fragment of \IL. An interesting but probably very difficult open problem would be to classify all the exceptional formulas. Justification of our heavily computational approach lies precisely in a desire to at least quantify their portion, and to see to what extent \IL\ ``comes close'' to \GL.

\section{Introduction}
\label{sectionintro}
The formulas of the closed fragment of \IL\ (which we will hereinafter call \ILC), due to their inductive formation, naturally form a structural hierarchy with respect to their \emph{complexity}, i.e.\@ the number of logical connectives. That complexity certainly depends on a set of symbols we decide to use. One possibility is to take a minimal set of symbols, consisting only of $\bot$, $\rightarrow$, and $\rhd$. It is not hard to see that these are sufficient: Negation of $A$ can be expressed as $A\rightarrow\bot$; it is well known that all Boolean connectives can be expressed with $\rightarrow$ and $\neg$; we have already discussed $\Box$; and $\Diamond$ can be expressed with $\Box$ and $\neg$ in the usual way. However, many other choices of the alphabet are possible, such as those including $\Box$ or $\top$.

In order to make the combinatorial part of this paper generally applicable, we leave \IL\ for a moment and work in the following setting. Suppose that we are given $p$ unary connectives $\diamond_1,\ldots,\diamond_p$, $q$ binary connectives $\ast_1,\ldots,\ast_q$, and $s$ primitive symbols $\clubsuit_1,\ldots,\clubsuit_s$, where $p\geq 0$, $q\geq 1$, $s\geq 1$. We consider closed formulas on this alphabet and let $\mathcal{F}_n$ denote the set of all such formulas of complexity $n$, that is, with $n$ connectives $\diamond_i$ and $\ast_j$ in total. For any property $\mathcal{P}$ of closed formulas we can define
\begin{equation}\label{eqsharedef}
\mu(\mathcal{P}) := \lim_{n\to\infty}\frac{\#\{F\in\mathcal{F}_n\,:\,\mathcal{P}(F)\}}{\#\mathcal{F}_n}.
\end{equation}
Here $\#$ denotes the number of elements in a finite set. The quantity $\mu(\mathcal{P})$ could be called the \emph{share} or the \emph{asymptotic density} of formulas with property $\mathcal{P}$. Note that the limit in \eqref{eqsharedef} does not have to exist. For example, formulas of even complexity do not have a share, as the corresponding sequence in \eqref{eqsharedef} alternates: $1,0,1,0,1,\ldots$. However, properties defined ``independently'' of complexity usually have a share. Also note that if only finitely many formulas have property $\mathcal{P}$, then $\mu(\mathcal{P})=0$, but the converse does not hold.

There is an auxiliary quantity associated with $\mathcal{P}$, which is closely linked to $\mu(\mathcal{P})$, but does not have an immediately obvious interpretation. Let us first denote
\begin{equation}\label{eqexpressionforr}
r = \frac{1}{p + 2qs + 2\sqrt{(p+qs)qs}}.
\end{equation}
This number captures the rate of the exponential growth of the total number of formulas with complexity $n$. More precisely, in Lemma \ref{lemma01}(d) we will show $\lim_{n\to\infty}\limits \#\mathcal{F}_{n+1} / \#\mathcal{F}_n = 1/r$. Next, for a property $\mathcal{P}$ we can define
$$ \lambda(\mathcal{P}) := \sum_{n=0}^{\infty} r^{n+1} \#\{F\in\mathcal{F}_n\,:\,\mathcal{P}(F)\}. $$
As a consequence of Lemma \ref{lemma01}(a) we will conclude that $\lambda(\mathcal{P})$ always exists as a finite nonnegative number. Its vague interpretation is that it constitutes an exponentially weighted total number of formulas with property $\mathcal{P}$, but the true meaning will be revealed in Sections \ref{sectioncombinatorics} and \ref{sectionproofs}. Our motivation for its definition is simply that $\mu(\mathcal{P})$ will often be expressed in terms of $\lambda(\mathcal{P})$. Obviously, $\lambda(\mathcal{P})=0$ if and only if there are no formulas satisfying property $\mathcal{P}$.

If some uppercase letter, such as $W$, denotes a generic formula with property $\mathcal{P}$, then we will also simply write $\mu_W$ and $\lambda_W$ in place of $\mu(\mathcal{P})$ and $\lambda(\mathcal{P})$. Let us also agree to use the corresponding calligraphic letter indexed by $n$, such as $\mathcal{W}_n$, to denote the set of those formulas of complexity $n$.

If the letter $F$ stands for an arbitrary closed formula, then we can schematically write the production rules as
$$ F \,::=\ \clubsuit_1 \ | \,\cdots\, | \ \clubsuit_s \ | \ \diamond_1\! F \ | \,\cdots\, | \ \diamond_p\! F \ | \ F \ast_1\! F \ | \,\cdots\, | \ F \ast_q\! F. $$
Each occurrence of the letter $F$ on the right hand side can represent a different closed formula, having strictly smaller complexity than the one on the left hand side. We are interested in calculating shares of certain subclasses of formulas given by similar production rules.

Let us now state our main combinatorial result in such a way that it can even be used as a ``black box'' for quantifying various classes of formulas, such as those appearing in \IL.

\begin{theorem}\label{theorem01}
\begin{itemize}
\item[(a)]
Suppose that a class of formulas $W$ is given by the productions
$$ W \,::=\, U \ | \ V, $$
where $U$ and $V$ represent two classes of formulas such that $\mathcal{U}_n$ and $\mathcal{V}_n$ are disjoint for each $n$. If $\mu_U$ and $\mu_V$ both exist, then $\mu_W$ also exists and we have
$$ \lambda_W = \lambda_U + \lambda_V,\quad \mu_W = \mu_U + \mu_V. $$
\item[(b)]
Suppose that a class of formulas $W$ is given by the productions
\begin{align*} W \,::=\ U \ | \ \diamond_1\! W \ | \,\cdots\, | \ \diamond_{p'}\! W \ | \ W \ast_1\! W \ | \,\cdots\, | \ W \ast_{q'}\! W \ | & \\
| \ (V\setminus W) \ast_1\! W \ | \,\cdots\, | \ (V\setminus W) \ast_{q''}\! W. &
\end{align*}
Here $p',q',q''$ are integer parameters such that $0\leq p'\leq p$, $0\leq q'\leq q$, $0\leq q''\leq q$, and $(p',q',q'')\neq(p,q,0)$, while $U$ and $V$ represent two classes of formulas. We additionally require that $\mathcal{U}_n$ does not contain any other formulas of complexity $n$ appearing on the right hand side, that $\mathcal{U}_0$ contains at least one primitive symbol, and that $\mathcal{W}_n$ is a subset of $\mathcal{V}_n$. If $\mu_U$ and $\mu_V$ both exist, then $\mu_W$ also exists and we have
\begin{align*}\lambda_W & = \frac{2\lambda_U}{1-p'r-q''\lambda_V + \sqrt{(1-p'r-q''\lambda_V)^2 - 4(q'-q'')\lambda_U}}, \\
\mu_W & = \frac{\mu_U + q''\lambda_W\mu_V}{1-p'r-q''\lambda_V - 2(q'-q'')\lambda_W}.
\end{align*}
\item[(c)]
Suppose that a class of formulas $W$ is given by the productions
$$ W \,::=\ \clubsuit_1 \ | \,\cdots\, | \ \clubsuit_{s'} \ | \ \diamond_1\! T \ | \,\cdots\, | \ \diamond_{p'}\! T \ | \ U \ast_1\! V \ | \,\cdots\, | \ U \ast_{q'}\! V. $$
Here $p',q',s'$ are integer parameters such that $0\leq p'\leq p$, $0\leq q'\leq q$, and $0\leq s'\leq s$, while $T$, $U$, $V$ represent three classes of formulas. If $\mu_T$, $\mu_U$, and $\mu_V$ all exist, then $\mu_W$ also exists and we have
$$ \lambda_W = rs' + p'r\lambda_T + q'\lambda_U \lambda_V,\quad \mu_W = p'r\mu_T + q'\lambda_V \mu_U + q'\lambda_U \mu_V. $$
\item[(d)]
Suppose that a class of formulas $W$ is given by the productions
$$ W \,::=\ U \ | \ \diamond_1\! W \ | \,\cdots\, | \ \diamond_{p'}\! W \ | \ \clubsuit_1 \ast_1\! (\diamond_1 W) \ | \,\cdots\, | \ \clubsuit_{q'} \ast_{q'}\! (\diamond_{q'} W). $$
Here $p'$ and $q'$ are integer parameters such that $0\leq p'\leq p$ and $0\leq q'\leq q$, while $U$ represents some class of formulas such that $\mathcal{U}_n$ does not contain any other formulas of complexity $n$ appearing on the right hand side. If $\mu_U$ exists, then $\mu_W$ also exists and we have
$$ \lambda_W = \frac{\lambda_U}{1-p'r-q'r^2},\quad \mu_W = \frac{\mu_U}{1-p'r-q'r^2}. $$
\end{itemize}
\end{theorem}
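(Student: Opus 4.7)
The natural approach is via ordinary generating functions. Writing $f_W(x):=\sum_n \#\mathcal{W}_n\, x^n$ for each class, we have $\lambda_W = r\, f_W(r)$ directly from the definition. Presuming that Lemma~\ref{lemma01} establishes an algebraic square-root singularity of $\mathcal{F}$ at $x=r$, every reasonable subclass $W$ we encounter will admit a singular expansion $f_W(x)=\alpha_W-\beta_W\sqrt{1-x/r}+O(1-x/r)$ near $x=r$; by the standard transfer theorem then $\#\mathcal{W}_n$ is asymptotic to a constant multiple of $\beta_W\, r^{-n} n^{-3/2}$, so $\mu_W$ is proportional to $\beta_W$ via a universal normalizing factor (the same one for all classes). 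Each part of the theorem therefore reduces to (i) translating the rule into an algebraic equation for $f_W$, (ii) setting $x=r$ to read off $\lambda_W$, and (iii) matching the coefficient of $-\sqrt{1-x/r}$ to read off $\mu_W$.

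Parts (a), (c), (d) are essentially routine in this framework. For (a), disjoint union gives $f_W = f_U + f_V$ and both claims follow by linearity. For (c) the production is non-recursive: $f_W = s' + p'x f_T + q' x f_U f_V$; plugging $x=r$ and using $r f_T(r)=\lambda_T$ etc.\ yields the $\lambda_W$ identity, while the Leibniz rule for the $-\sqrt{1-x/r}$ coefficient of a product gives the $\mu_W$ identity. For (d) the recursion is linear, $f_W(1-p'x-q'x^2) = f_U$, and dividing by the regular, nonzero factor $1-p'r-q'r^2$ transports both identities at once.

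The substantive case is (b). I would translate the productions into the quadratic
\[(q'-q'')\,x\, f_W^2 \;-\; (1-p'x-q''x f_V)\,f_W \;+\; f_U \;=\; 0,\]
and select the branch determined by $f_W(0)=f_U(0)$, which is
\[f_W \;=\; \frac{2 f_U}{(1-p'x-q''x f_V) \;+\; \sqrt{(1-p'x-q''x f_V)^2 - 4(q'-q'')x f_U}}.\]
Substituting $x=r$ and $r f_U(r)=\lambda_U$, $r f_V(r)=\lambda_V$ gives the stated formula for $\lambda_W$ directly. For $\mu_W$ I substitute the singular ansatz $f_W=\alpha_W-\beta_W u+O(u^2)$, $f_V=\alpha_V-\beta_V u+O(u^2)$, $f_U=\alpha_U-\beta_U u+O(u^2)$ with $u=\sqrt{1-x/r}$ (so $x=r+O(u^2)$) into the quadratic and match powers of $u$: the $u^0$ identity recovers the equation for $\lambda_W$, while the $u^1$ identity collapses to
\[\beta_W\bigl(1-p'r-q''\lambda_V-2(q'-q'')\lambda_W\bigr) \;=\; \beta_U + q''\lambda_W\, \beta_V,\]
which after dividing by the common normalization becomes exactly the asserted formula for $\mu_W$.

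The main obstacle I anticipate is justifying that $f_W$ genuinely inherits its dominant singularity (both its position at $x=r$ \emph{and} its square-root type) from $\mathcal{F}$, rather than acquiring a new one. The hypotheses $(p',q',q'')\neq(p,q,0)$, $\mathcal{U}_0\neq\emptyset$, and $\mathcal{W}_n\subseteq\mathcal{V}_n$ are calibrated precisely to keep the discriminant $(1-p'x-q''x f_V)^2-4(q'-q'')x f_U$ strictly positive at $x=r$, so that the only singular contributions to $f_W$ come through $f_U$ and $f_V$ and propagate cleanly through the explicit formula above. Once this is secured (presumably via Lemma~\ref{lemma01} and the explicit value of $r$), the coefficient-matching above is routine.
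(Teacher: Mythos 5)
Your handling of the $\lambda$-identities is sound and matches the paper's: multiply the recurrence by $z^{n+1}$, sum, solve the resulting (possibly quadratic) functional equation, determine the branch from the value at $z=0$, and evaluate at $z=r$. The positivity argument for the discriminant at $z=r$ that fixes the branch is essentially what the paper does for Lemma~\ref{lemma02}(a).

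For the $\mu$-identities, however, there is a genuine gap, and it is the one you flagged but did not resolve. Your entire plan rests on the premise that $f_U$, $f_V$, $f_T$ each admit a singular expansion of the form $\alpha - \beta\sqrt{1-z/r} + O(1-z/r)$ near $z=r$ together with analytic continuation to a $\Delta$-domain, so that Flajolet--Odlyzko transfer can be applied and inverted (``$\mu_W$ is proportional to $\beta_W$''). None of this is part of the hypotheses. The theorem only assumes that the ordinary limits $\mu_U, \mu_V, \mu_T$ \emph{exist}; it says nothing about the analytic structure of the generating functions of these classes at $z=r$. The existence of $\lim_n u_n/f_n$ is strictly weaker than a square-root singular expansion of $u(z)$ at $r$: for example, a class $U$ with $u_n = f_n(\mu_U + 1/\log n)$ for $n\ge 2$ would have $\mu_U$ existing, while $u(z) - \mu_U f(z)$ has logarithmic-type corrections at $z=r$ that your coefficient-matching scheme would not see and that the transfer theorem, as you invoke it, does not cover. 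So as written, your proof establishes the theorem only under a stronger (and unstated) hypothesis on $U,V,T$; and an inductive verification of those hypotheses across all the classes used in Section~\ref{sectioncalculations} would be a substantial additional burden that the theorem's formulation is designed to avoid.

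The paper sidesteps this entirely and works at the level of the sequences, not their generating functions. For Lemma~\ref{lemma02}(b) it splits the convolution in the defining recurrence into two ``edge'' sums $a_{m,n}, b_{m,n}$ (with $k<m$ resp. $n-k-1<m$) and a middle tail $c_{m,n}$; the edge terms are controlled term-by-term using only $\lim_n f_{n-k}/f_n = r^k$ from Lemma~\ref{lemma01}(d) and the assumed limits $\mu_U,\mu_V$, while the tail is dominated by the corresponding piece of the $f$-recurrence, whose $\limsup$ is shown to vanish as $m\to\infty$ via the identity $1-pr-2q\lambda_F=0$. Squeezing $\liminf$ against $\limsup$ then forces $\underline{\mu}_W=\overline{\mu}_W$ and yields the formula. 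Compared with singularity analysis, this costs nothing in analytic regularity assumptions on $U$ and $V$, at the price of a longer (but entirely elementary) $\liminf/\limsup$ bookkeeping. The paper explicitly states in Section~\ref{sectionintro} that it avoids the generating-function route precisely because of how unwieldy it becomes across the many interlocking classes; Theorem~\ref{theoremaux} is used only once, to pin down the asymptotics of $f_n$ itself in Lemma~\ref{lemma01}(c).
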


It is worthwhile noting that one is allowed to alter the productions in the above theorem in ways that do not change the cardinalities of the corresponding sets of formulas with complexity $n$, and keep the sets of formulas coming from different productions disjoint. For instance, $(V\setminus W) \ast_1\! W$ in Part (b) can be replaced with its ``reflected'' counterpart $W \ast_1\! (V\setminus W)$. Also, $\clubsuit_1 \ast_1\! (\diamond_1 W)$ can be replaced with $\clubsuit_1 \ast_1\! (\clubsuit_1 \ast_1\! W)$, $\diamond_1 (\clubsuit_1 \ast_1\! W)$, or $\diamond_1 (\diamond_1 W)$, as long as it never coincides with any of $\diamond_i W$ for $i=1,\ldots,p'$. Finally, the unary connectives $\diamond_i$ can be interchanged arbitrarily throughout the theorem, and the same is true for the binary connectives $\ast_j$, or the primitive symbols $\clubsuit_k$.

The proof of Theorem~\ref{theorem01} will span over the Sections \ref{sectioncombinatorics} and \ref{sectionproofs}. Let us remark that Part (b) will be the most difficult result to prove, while Part (a) is an immediate consequence of the definitions. Even though there already exists a vast amount of combinatorial literature on the enumeration techniques, in Section~\ref{sectionproofs} we come up with a simple trick that allows us to compare asymptotic behaviors of certain recursively defined sequences. Our technique is a more practical alternative to the well-known method of computing sequence asymptotics from the expression for its generating function, which can be found in \cite[Chapters~IV--VIII]{FS}. Due to multiple-recursive nature of the definitions to follow, the latter method would be computationally very hard to apply, leading to multiple-page expressions for generating functions. The only exception is the total class of formulas $F$, where we will resort to that heavy machinery just to derive the asymptotics of $\#\mathcal{F}_n$. However, we believe that the main novelty of Theorem~\ref{theorem01} lies in its elementary formulation and ready applicability to counting problems.

\smallskip
In Section~\ref{sectiondefinitions} we will define certain classes of formulas to which the results of \cite{MC} can be applied and in Section~\ref{sectioncalculations} we will repeatedly apply Theorem~\ref{theorem01} to evaluate their shares. This will result in a lower bound on the share of formulas in \ILC\ that are equivalent to a formula in \GL. The exact numerical value of this bound depends on the set of symbols we agree to work with. We work with three reasonable alphabet choices in order to argue that the ``\GL-like'' formulas we consider constitute a ``majority'' of \IL.

\begin{theorem}\label{theorem02}
\begin{itemize}
\item[(a)] If the alphabet is $\{\bot,\rightarrow,\rhd\}$, then $\share{\GL-like} = 0.93771\ldots$.
\item[(b)] If the alphabet is $\{\bot,\rightarrow,\rhd,\Box\}$, then $\share{\GL-like} = 0.86653\ldots$.
\item[(c)] If the alphabet is $\{\bot,\top,\rightarrow,\rhd,\Box\}$, then $\share{\GL-like} = 0.90519\ldots$.
\end{itemize}
\end{theorem}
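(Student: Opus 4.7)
The proof reduces to a mechanical application of Theorem~\ref{theorem01} once the grammars are set up, so the real work lies in identifying the right subclasses of \ILC\ and translating the classification from \cite{MC} into context-free production rules that fit the templates of parts (a)--(d). First I would recall from \cite{MC} the syntactic criterion picking out those closed \IL\ formulas shown to admit \GL-equivalents; call the resulting class $W$, and the goal is to compute $\share{\GL-like}=\mu_W$ for each of the three alphabets, or more precisely a lower bound for that share matching one of the three stated values.

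Second, I would produce, for each alphabet, an explicit collection of productions defining $W$ together with the auxiliary classes it must refer to: the closed \GL-fragment itself, certain $\Box$-guarded formulas, and complements of the form $V\setminus W$ needed in Theorem~\ref{theorem01}(b). Moving from $\{\bot,\rightarrow,\rhd\}$ to $\{\bot,\rightarrow,\rhd,\Box\}$ adds $\Box$ as a unary connective (so $p$ becomes $1$) and splits the productions that previously relied on the definable $\Box A\equiv(A\rightarrow\bot)\rhd\bot$; adding $\top$ raises $s$ to $2$ and similarly refines the base cases. I would then check, at every use, the side conditions of Theorem~\ref{theorem01}: nonempty $\mathcal{U}_0$ in parts (b) and (d), pairwise disjointness of the classes appearing on the right-hand side at each complexity $n$, and the inclusions $\mathcal{W}_n\subseteq\mathcal{V}_n$ demanded by part (b).

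Third, the numerical step. I would evaluate $r$ from \eqref{eqexpressionforr} with the concrete $(p,q,s)$ of each alphabet, obtaining $r=1/8$ in (a), $r=1/(5+2\sqrt{6})$ in (b), and $r=1/(9+4\sqrt{5})$ in (c). Then I would traverse the grammar bottom-up, applying parts (a), (c), and (d) to express the $\lambda$- and $\mu$-values of the simpler building blocks in closed form, and finally invoking part (b) at the top to solve for $\lambda_W$ and $\mu_W$. From this point everything is ordinary arithmetic in a field containing a few nested square roots, and the three numerical values of the theorem drop out; I would double-check them numerically to the stated precision.

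The main obstacle will be the second step: extracting from \cite{MC} a grammar for $W$ that is simultaneously complete enough that $\mu_W$ truly captures the share of \GL-equivalent formulas proved in \cite{MC}, modular enough that each production matches exactly one template of Theorem~\ref{theorem01}, and disjoint at every complexity level so that the four parts of the theorem compose correctly. The self-referential position of $W$ on the right-hand argument of $\rhd$ forces the use of part (b) with its $(V\setminus W)\ast W$ productions; choosing $V$ properly so that $\mathcal{W}_n\subseteq\mathcal{V}_n$ while $\mu_V$ remains computable by earlier parts is the crux of the whole argument. Once such a clean grammar is fixed, turning it into numbers is a deterministic cascade of the identities supplied by Theorem~\ref{theorem01}.
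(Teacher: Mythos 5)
Your overall framework is the same as the paper's: compute $r$ from the alphabet parameters (your three values $r=1/8$, $1/(5+2\sqrt{6})$, $1/(9+4\sqrt{5})$ are all correct), set up production rules for the relevant classes, and apply Theorem~\ref{theorem01} bottom-up. But you have correctly identified your own gap and then not filled it: the decomposition of ``\GL-like'' into grammar rules that fit Theorem~\ref{theorem01} is precisely where all the substance of the proof lives, and your sketch treats it as a bookkeeping chore rather than solving it.

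Two concrete omissions matter. First, Theorem~\ref{theorem01} only composes cleanly when the sets of formulas coming from different productions are \emph{disjoint} at every complexity level, and the classes one actually needs are not: the five subclasses of basic locally \GL-like formulas from Theorem~\ref{theoremmc} (that is, $\bot\rhd F$, $F\rhd A$, $A\rhd\bot$, $N\rhd X$, $X\rhd\bot$) overlap pairwise (for instance $\bot\rhd A$ lies in both $\bot\rhd F$ and $F\rhd A$), and the top-level union $L\cup N\cup M\cup\{(G\rightarrow\bot)\rhd\bot\}$ also has nontrivial pairwise and triple intersections. The paper spends most of Section~\ref{sectioncalculations} running the inclusion--exclusion principle on these intersections, and it must also introduce an auxiliary class $P$ (formulas that are simultaneously locally \GL-like and negative) with its own instance of Theorem~\ref{theorem01}(b). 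Your proposal never mentions inclusion--exclusion at all, and without it the grammar ``cleanly fitting each template'' that you posit simply does not exist. Second, a smaller point: the final step does not use part (b) as you say; $G$ satisfies a production of the form $Q\;|\;(G\rightarrow\bot)\rhd\bot$, which falls under Theorem~\ref{theorem01}(d), yielding $\mu_G=\mu_Q/(1-r^2)$ (and with $\Box$ added, $\mu_G=\mu_Q/(1-p'r-q'r^2)$). Part (b) is used earlier, for $N$, $L$, and $P$.

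So the proposal is a correct outer shell around the theorem, but it defers the entire combinatorial heart of the argument --- the identification of the intermediate classes $N,A,M,X,B,L,P,Q$, the verification of which ones are disjoint and which are not, and the inclusion--exclusion that repairs the overlaps --- to a step you acknowledge is ``the crux'' but do not carry out.
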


Intuitively, according to Part (a), if we pick an \ILC\ formula of large enough complexity at random, there is less than $7\%$ chance that it \emph{does not} have a normal form as some closed \GL\ formula. Of course, the true probability can be much smaller. What we will call ``\GL-like formulas'' will be just the ones that are syntactically simple enough that they can be easily seen to have \GL-equivalents. But even so, the result is quite surprising in our opinion, especially because it is obtained by an almost entirely syntactical approach. Let us also remark that Theorem~\ref{theorem01} will guarantee the existence of the limits in Theorem~\ref{theorem02}. Their exact values are quite complicated numerical expressions; see Section~\ref{sectioncalculations}.

The fact that the limit $\share{$\GL$-like}$ is so close to $1$ is also interesting from a complexity point of view: In~\cite{Bou} it is proved that establishing whether an \ILC\ formula is a theorem is PSPACE-hard. This means that even if there were a normal form theorem for the \ILC, it would be practically useless, since it must be very difficult to find a normal form for an arbitrary \IL\ formula. However, this article shows that ``practicality'' can be understood differently: the great majority of instances of theoremhood problem are actually very easy and solvable in polynomial time. A small number of them are very hard, although they are easily recognizable.

\section{Defining our classes}
\label{sectiondefinitions}
Let us review the main result of \cite{MC}. First we will state the definitions in the equivalent form that fits better in the context of this article.

\begin{definition}
In \ILC\ with a minimal syntax, the classes of all formulas ($F$), \emph{affirmative formulas} ($A$), \emph{negative formulas} ($N$), \emph{direct formulas} ($M$) and \emph{cross formulas} ($X$) are given by the following productions:
$$ \begin{array}{l}
    F::=A\ | \ N,\\
    A::=F\rhd F \ | \ N\rightarrow F \ | \ F\rightarrow A,\\
    N::= \bot \ | \ A\rightarrow N,\\
    M::= A\rhd N,\\
    X::= M\rightarrow\bot.
\end{array}$$
In words, any \ILC\ formula is either affirmative or negative. Each negative formula is either $\bot$, or is of the form $A\rightarrow N'$, where $A$ is an affirmative formula and $N'$ is a smaller negative formula; and so on.
\end{definition}

\begin{theorem}[from \cite{MC}]\label{theoremmc}
Let $F$ be an \ILC\ formula, $A$ an affirmative formula, $N$ a negative formula, $M$ a direct formula, and $X$ a cross formula. Then we have:
$$\begin{array}{r@{\,\rhd\,}l@{\;\Longleftrightarrow\;}l}
\bot&F&\top,\\
F&A&\top,\\
A&\bot&\Box\bot,\\
N&X&\top,\\
X&\bot&\Box\Box\bot,
\end{array}$$
$$\begin{array}{r@{\;\stackrel{globally}{\Longleftrightarrow}\;}l}
N&\bot,\\
M&\Box\bot.
\end{array}$$
\end{theorem}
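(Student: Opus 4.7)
The plan is to establish the seven equivalences simultaneously by mutual structural induction along the grammar defining $F$, $A$, $N$, $M$, $X$. Because the productions are entangled (with $A$ feeding into $N$ and $M$, and vice versa), the induction must be packaged as a single statement with several clauses, and at each step the IH is applied to strictly smaller subformulas that may belong to a different class than the one currently being handled.

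I would first address the two global reductions $N \stackrel{\mathrm{globally}}{\Longleftrightarrow} \bot$ and $M \stackrel{\mathrm{globally}}{\Longleftrightarrow} \Box\bot$, since the local equivalences will be deduced from them. The base case $N = \bot$ is immediate. For $N = A \to N'$ the IH on $N'$ reduces the step to a statement about the affirmative head $A$, which splits according to the three productions for $A$: (i) $A = N'' \to F$ is handled by the IH on $N''$; (ii) $A = F \to A'$ is handled by the IH on $A'$; (iii) $A = F_1 \rhd F_2$ is handled by reducing $F_1$, $F_2$ via IH and applying the \IL-principles $\vdash \bot \rhd \phi$, $\vdash \phi \rhd \top$, together with Löb's scheme to absorb the residual nested interpretability expression. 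The global statement for $M$ then drops out in one step from $M = A \rhd N$, $N \stackrel{\mathrm{globally}}{\Longleftrightarrow} \bot$, and the local equivalence $A \rhd \bot \Leftrightarrow \Box\bot$.

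With these global facts in hand, the local equivalences follow quickly. The equivalence $\bot \rhd F \Leftrightarrow \top$ comes from J1 applied to the propositional tautology $\bot \to F$. The equivalence $F \rhd A \Leftrightarrow \top$ is obtained by unfolding $A$ through the three affirmative productions and verifying $F \rhd A$ for each resulting shape. The identity $\Box\phi \leftrightarrow (\neg\phi \rhd \bot)$ then yields $A \rhd \bot \Leftrightarrow \Box\bot$, combined with the global reduction of $A$. Finally, $N \rhd X \Leftrightarrow \top$ and $X \rhd \bot \Leftrightarrow \Box\Box\bot$ are obtained by substituting $X = M \to \bot$ and $M \stackrel{\mathrm{globally}}{\Longleftrightarrow} \Box\bot$ into the defining shape and invoking the previous clauses one level up.

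The main obstacle is the production $A = F_1 \rhd F_2$ inside the global induction. In the other cases the inductive step is essentially propositional bookkeeping glued to the IH, but here one must absorb a nested $\rhd$-expression using genuinely interpretability-logic-specific reasoning, and the absorption succeeds only up to \emph{global}, not local, equivalence: this is precisely why the theorem is forced to distinguish the two flavors of equivalence. Organizing the mutual induction so that this delicate case has the correct IH available on every relevant subformula — including those cross-referenced between $F$, $A$, and $N$ — is the substantive bookkeeping work of the proof.
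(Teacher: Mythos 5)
This theorem is not proved in the paper you are reviewing against: it is stated explicitly as ``Theorem (from \cite{MC})'' and the surrounding text says ``Let us review the main result of \cite{MC}.'' The present article imports the result as a black box and does all its own work on the combinatorial side (Lemmas~\ref{lemma01}--\ref{lemma03}, Theorem~\ref{theorem01}). There is therefore no in-paper argument to compare your sketch against, and any proof would have to be checked against \cite{MC} directly, which you did not have access to.

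Taken on its own terms, your outline identifies a plausible strategy (a mutual structural induction across $F$, $A$, $N$, $M$, $X$ following the grammar), but it leaves the one case you yourself flag as the crux essentially unargued. In the production $A = F_1 \rhd F_2$ you write that one ``applies L\"ob's scheme to absorb the residual nested interpretability expression,'' but this names no concrete \IL\ principle (J1--J5, or even the specific instance $(A \rhd \bot) \to \Box\neg A$) and gives no indication of why the absorption works at the level of global but not local equivalence, which is the whole reason the theorem splits into two lists. A closely related omission: you never commit to a definition of ``globally equivalent,'' yet the shape of the induction (what the IH asserts about an affirmative subformula $A$ sitting inside $N = A \to N'$, and whether that assertion propagates through $\rhd$) depends entirely on that definition. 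Until the IH for each syntactic class is written down precisely, including a clause for $A$, and the $F_1 \rhd F_2$ case is carried out against an explicit notion of global equivalence, the sketch does not yet constitute a proof, and one cannot tell whether it reconstructs the argument of \cite{MC} or diverges from it.
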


Next, we say that an \ILC\ formula is \emph{basic locally \GL-like} if it belongs to at least one of the five classes of formulas having locally equivalent \GL\ formulas according to Theorem~\ref{theoremmc}, namely
$$ \bot\rhd F,\quad F\rhd A,\quad A\rhd\bot,\quad N\rhd X,\quad X\rhd\bot . $$
We say that an \ILC\ formula is \emph{locally \GL-like} if it is $\bot$, or it is basic locally \GL-like, or it is a Boolean combination $L'\rightarrow L''$ of smaller locally \GL-like formulas $L'$ and $L''$. We denote basic locally \GL-like formulas by the letter $B$ and locally \GL-like formulas by the letter $L$. Obviously, all locally \GL-like formulas, not just the basic ones, have normal forms in \GL: If $F$ has normal form $F_1$ and $F'$ has normal form $F'_1$, then $F\rightarrow F'$ has normal form equivalent to $F_1\rightarrow F'_1$.

We have not yet taken negative and direct formulas into account, as they only possess global \GL-equivalents. Working with non-locally \GL-like formulas is harder, since we do not have the substitution property we had for locally \GL-like formulas: A Boolean combination of two formulas having global \GL-equivalents does not necessarily have a global \GL-equivalent. If it were true, then since every affirmative formula is equivalent to negation of a negative formula, we would have global \GL-equivalents for all \ILC\ formulas, which is known not to be the case; see~\cite[Section 3]{MC}.

However, there is a way to get larger \GL-like formulas from smaller ones, and that is the $\Box$ operator from \GL, or more precisely, its \IL-equivalent. It is easily seen that $(F\rightarrow\bot)\rhd\bot$ is locally equivalent to $\Box F$, and if $F$ has a global \GL-equivalent $F_1$, then $(F\rightarrow\bot)\rhd\bot$ has a global \GL-equivalent (that is equivalent to) $\Box F_1$. This motivates the following definition.

\begin{definition}
We say that an \ILC\ formula is \emph{\GL-like} if it is locally \GL-like, or negative, or direct, or of the form $(G'\rightarrow\bot)\rhd\bot$, where $G'$ is a smaller \GL-like formula. We denote \GL-like formulas by the letter $G$.
\end{definition}

Our ultimate goal is to compute the share of \GL-like formulas. To achieve this we will obviously need to progress gradually through the simpler classes of formulas defined above. Before attempting to evaluate shares of these classes, we insert two sections devoted to establishing asymptotic behaviors of some recursively defined sequences.

\section{Statements of the results on sequence asymptotics}
\label{sectioncombinatorics}

In this section we state three auxiliary lemmas that constitute a purely combinatorial reformulation of Theorem~\ref{theorem01}. Their proofs will be postponed to the next section. This material can be skipped without hurting the understanding of the ``logical'' part of the article.

Let $p\geq 0$, $q\geq 1$, and $s\geq 1$ be three integer parameters; they can be interpreted as in Section~\ref{sectionintro}. Let $f_n=\#\mathcal{F}_n$ denote the total number of closed formulas of complexity $n\geq 0$. Observe that the only formulas of complexity $0$ are the primitive symbols, so $f_0=s$. Moreover, each formula of complexity $n\geq 1$ is composed either by applying the main unary connective to a formula of complexity $n-1$, or is divided by its main binary connective into two formulas whose complexities add up to $n-1$. That way we arrive at the recurrence relation that enumerates the total number of closed formulas,
\begin{equation}\label{eqbasicrecurrence}
f_0=s, \qquad f_{n} = p f_{n-1} + q \sum_{k=0}^{n-1} f_k f_{n-k-1} \quad\textrm{for } n\geq 1.
\end{equation}
For some choices of the parameters the obtained sequence $(f_n)_{n=0}^{\infty}$ is standard and named. Take $s=1$ for simplicity. If $p=0$ and $q$ is arbitrary, then $f_n=q^n C_n$, where $(C_n)_{n=0}^{\infty}$ stands for the \emph{Catalan numbers}, see entry A000108 in \cite{OEIS}. For $p=1$, $q=1$ the sequence $(f_n)_{n=0}^{\infty}$ becomes the \emph{Large Schr\"{o}der numbers}, entry A006318, for $p=1$, $q=2$ we obtain sequence A103210, while for $p=2$, $q=1$ we get sequence A047891 in the encyclopedia \cite{OEIS}. Let us remark that a closed expression for $f_n$ does not seem to exist in the cases when $p\geq 1$.

We will need some of the standard asymptotic enumeration techniques to extract the desired growth properties of $f_n$. Consider the ordinary generating function of the shifted sequence $(f_{n-1})_{n=1}^{\infty}$, which is given by the formula
$$ f(z) = \sum_{n=1}^{\infty} f_{n-1} z^n = \sum_{n=0}^{\infty} f_n z^{n+1} $$
for any complex number $z$ for which the above series converges. Since this is a particular case of a power series, it is a well known fact (see \cite[Chapter~3]{Rudin}) that there exists a number $0\leq R\leq\infty$, called the \emph{radius of convergence}, such that the series converges absolutely when $|z|<R$ and diverges for $|z|>R$. Observe that nothing is claimed for complex numbers on the circle $|z|=R$. An excellent introductory course to the generating functions with the required prerequisites can be found in \cite[Chapters~10--11]{BenWill} or \cite{GF}.

Take $z\in\mathbb{C}$ such that the series defining $f(z)$ converges absolutely. Multiplying \eqref{eqbasicrecurrence} by $z^{n+1}$ and summing over $n\geq 1$ we get
$$ \sum_{n=1}^{\infty} f_n z^{n+1} = p z \sum_{n=1}^{\infty} f_{n-1} z^n + q \sum_{n=1}^{\infty} \sum_{k=0}^{n-1} f_k z^{k+1} f_{n-k-1} z^{n-k}. $$
Substituting $l=n-k-1$ we see that both $k$ and $l$ range over the nonnegative integers. Rearranging the terms and using $f_0=s$ we obtain
$$ f(z) - sz = p z f(z) + q f(z)^2, $$
which is a quadratic equation for $f(z)$ and can be solved as
\begin{equation}\label{eqgeneratingf}
f(z) = \frac{1-pz \pm \sqrt{(1-pz)^2-4qsz}}{2q}.
\end{equation}
Recall that the square root function above is taken from a complex number and so can be defined as $\sqrt{\zeta}:=e^{(1/2)\mathop{\mathrm{Log}}\zeta}$, where $\mathop{\mathrm{Log}}$ stands for the principal branch of the complex logarithm, i.e. $\mathop{\mathrm{Log}}(\rho e^{i\phi})=\log\rho+i\phi$ for $\rho>0$ and $\phi\in(-\pi,\pi)$. Inspecting the obtained formula we suspect that the ``smallest singularity'' of the generating function is at $z=r$, where $r$ is the smaller of the two positive solutions of the equation $(1-pz)^2=4qsz$, i.e.\@ it is given by Formula \eqref{eqexpressionforr}. Let us remark that we have chosen to write the expression for $r$ in a slightly nonstandard form (with a radical in the denominator), in order to handle both the quadratic case $p\neq 0$ and the linear case $p=0$. The latter case is indeed trivial because $r$ is then the solution of $1=4qsz$.

\begin{lemma}\label{lemma01}
Suppose that the number $r$ is defined by \eqref{eqexpressionforr} and the sequence $(f_n)_{n=0}^{\infty}$ is given by \eqref{eqbasicrecurrence}.
\begin{itemize}
\item[(a)] The series $\sum_{n=1}^{\infty} f_{n-1} r^n$ converges and its sum equals $\frac{1-pr}{2q} = \sqrt{\frac{rs}{q}}$.
\item[(b)] The radius of convergence of the power series $\sum_{n=1}^{\infty} f_{n-1} z^n$ equals $r$ and its sum is given by $f(z)=\frac{1-pz - \sqrt{(1-pz)^2-4qsz}}{2q}$ for each complex number $z$ such that $|z|<r$.
\item[(c)] $f_n \sim \displaystyle\frac{(1+pr)^{1/2}s}{(1-pr)^{3/2}\sqrt{\pi}} \displaystyle\frac{(1/r)^n}{n^{3/2}}$ as $n\to\infty$.\\
    (Here $a_n\sim b_n$ means that the ratio $a_n/b_n$ approaches $1$ in the limit.)
\item[(d)] For any fixed positive integer $k$ the limit $\lim_{n\to\infty}\limits\frac{f_{n-k}}{f_n}$ exists and equals $r^k$.
\end{itemize}
\end{lemma}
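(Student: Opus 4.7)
The plan is to prove (a) and (b) together by a quadratic self-consistency argument, obtain (c) through classical singularity analysis, and derive (d) as an immediate corollary of (c).

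For (a), I would set $a_n := f_{n-1} r^n$ and rewrite the recurrence \eqref{eqbasicrecurrence} as $a_{n+1} = pr\cdot a_n + q\sum_{j=1}^{n} a_j a_{n+1-j}$, with $a_1 = sr$. Summing from $n=1$ to $N$ and upper-bounding the double sum by $S_N^2$, where $S_N := \sum_{n=1}^{N} a_n$, yields $S_{N+1} \leq sr + pr\, S_N + q S_N^2$. Since $r$ was designed so that the discriminant $(1-pr)^2 - 4qsr$ vanishes, the polynomial $q x^2 - (1-pr)x + sr$ is nonnegative on $\mathbb{R}$ with unique (double) root $L := (1-pr)/(2q)$; the identity $sr + prL + qL^2 = L$, together with the monotonicity of $x\mapsto sr + prx + qx^2$ on $[0,L]$, drives an induction showing $S_N \leq L$ for all $N$. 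The sequence $(S_N)$ is monotone and bounded, hence $S_N \to S \leq L$; passing to the limit in the equality version of the recurrence (Tonelli handles the nonnegative convolution) forces $S$ to satisfy the same quadratic, and therefore $S = L$. The alternative form $\sqrt{rs/q}$ is just a rewriting of $(1-pr)^2 = 4qrs$.

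Part (b) follows quickly. From (a), the radius of convergence $R$ of $\sum f_{n-1}z^n$ satisfies $R \geq r$. For $|z| < r$, absolute convergence and a Cauchy product justify $q f(z)^2 - (1-pz)f(z) + sz = 0$, so $f(z) \in \{f_-(z), f_+(z)\}$ pointwise. The discriminant $(1-pz)^2 - 4qsz$ is a polynomial whose only complex zeros are the positive reals $r$ and $r' > r$, so its principal square root is holomorphic on the simply connected disc $|z|<r$ and equals $1$ at the origin. Since $f(0)=0=f_-(0) \neq f_+(0)=1/q$, continuity forces $f \equiv f_-$ throughout the disc. To exclude $R > r$ I would invoke Pringsheim's theorem (positive coefficients make $z=R$ a singularity of $f$) together with the branch-type singularity of $f_-$ at $z=r$ (which is visible from $f_-'(z)\to\infty$ as $z\to r^-$, because $r$ is a simple zero of the discriminant).

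For (c) I would apply the transfer theorem of Flajolet--Sedgewick \cite[Ch.~VI]{FS}. Factoring the discriminant as $p^2(z-r)(z-r')$ for $p \geq 1$ (and $-4qs(z-r)$ for $p=0$), and combining Vieta's relations $rr' = 1/p^2$, $r + r' = (2p+4qs)/p^2$ into $r' - r = (1 - p^2 r^2)/(p^2 r)$, the local expansion takes the uniform form
$$ f(z) \;=\; \frac{1-pr}{2q} \;-\; \frac{\sqrt{(1-p^2 r^2)/r}}{2q}\,\sqrt{r-z}\;\bigl(1 + O(r-z)\bigr) $$
near $z = r$, equally valid in the $p=0$ case. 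The transfer theorem gives $[z^n]\sqrt{r-z} \sim -\sqrt{r}\,r^{-n}/(2\sqrt{\pi}\,n^{3/2})$, so the index shift $f_n = [z^{n+1}]f(z)$ produces the leading constant $\sqrt{1 - p^2 r^2}/(4qr\sqrt{\pi})$, which one further use of $(1-pr)^2 = 4qrs$ rewrites as $(1+pr)^{1/2} s/((1-pr)^{3/2}\sqrt{\pi})$. Part (d) is then immediate: $f_{n-k}/f_n$ equals $r^k \cdot (n/(n-k))^{3/2}$ up to a $(1+o(1))$-factor, and the second factor tends to $1$ for fixed $k$.

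The only genuinely delicate step is the singularity analysis for (c): keeping the expansion near $z = r$ clean enough that the linear case $p = 0$ and the quadratic case $p \geq 1$ collapse into a single formula. Once the Vieta identity $r' - r = (1 - p^2 r^2)/(p^2 r)$ is in hand, the rest is algebraic bookkeeping driven by repeated use of $(1-pr)^2 = 4qrs$.
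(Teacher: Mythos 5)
Your proof is correct and follows essentially the same route as the paper: bound the partial sums $S_N$ by the double root $L$ of the associated quadratic, pass to the limit to identify the sum with $L$, derive the quadratic functional equation for the generating function, select the branch by evaluation at $z=0$, and then run singularity analysis (you cite Flajolet--Sedgewick where the paper cites Flajolet--Odlyzko, but it is the same transfer theorem; your simplification of the leading constant via $(1-pr)^2 = 4qrs$ matches the paper's). The only noteworthy variation is how $R \le r$ is established in Part~(b): the paper observes that the quadratic formula for $f$ gives a non-real value on the real interval between the two roots of the discriminant, so a power series with nonnegative coefficients cannot converge there; you instead invoke Pringsheim's theorem together with the blow-up of $f_-'$ as $z\to r^-$. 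Both arguments are valid, and the paper's is marginally more elementary in that it avoids Pringsheim.
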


Let $w_n$ equal the number of formulas with property $\mathcal{P}$ having exactly $n$ connectives, so that clearly $0\leq w_n\leq f_n$. The two numerical quantities associated with $\mathcal{P}$ that we introduced in Section~\ref{sectionintro} can now be written in terms of the sequence $(w_n)_{n=0}^{\infty}$ as
$$ \mu(\mathcal{P}) = \mu_W = \lim_{n\to\infty}\frac{w_n}{f_n} $$
and
$$ \lambda(\mathcal{P}) = \lambda_W = \sum_{n=0}^{\infty} w_n r^{n+1} = \sum_{n=1}^{\infty} w_{n-1} r^n. $$
By Part (b) of Lemma~\ref{lemma01} we know
$$ \sum_{n=1}^{\infty} w_{n-1} r^n \leq \sum_{n=1}^{\infty} f_{n-1} r^n < \infty, $$
so $\lambda_W$ is actually the value of the generating function of $(w_{n-1})_{n=1}^{\infty}$ at the point $z=r$. In particular,
\begin{equation}\label{eqlambdaf}
\lambda_F = \frac{1-pr}{2q},
\end{equation}
where the letter $F$ represents an arbitrary closed formula, i.e.\@ $\lambda_F$ corresponds to the sequence $(f_n)_{n=0}^{\infty}$. Recall once again that the limit defining $\mu_W$ might not exist. We are indeed primarily interested in $\mu_W$, but the quantity $\lambda_W$ will naturally appear in formulas that follow. In general we will have $0\leq \lambda_W\leq \lambda_F$ and $0\leq \mu_W\leq 1$ whenever the share $\mu_W$ exists.

Equations \eqref{eqformula1a}--\eqref{eqformula2b} in the following lemmas will relate $\lambda$-values and $\mu$-values of sequences connected through quite special recurrence equations. Let us consider three sequences $(u_n)_{n=0}^{\infty}$, $(v_n)_{n=0}^{\infty}$, $(w_n)_{n=0}^{\infty}$ counting certain classes of formulas denoted respectively by letters $U,V,W$. Consequently, $0\leq u_n,v_n,w_n\leq f_n$ for each $n\geq 0$.

We can state the principal result of this section.

\begin{lemma}\label{lemma02}
Take three parameters $\alpha,\beta,\gamma$ satisfying
$$ \alpha,\beta\geq 0, \quad \gamma\in\mathbb{R}, \quad \alpha r+\beta\lambda_F<1, \quad \alpha r+(\beta+2\gamma)\lambda_F<1, $$
where $\lambda_F$ can be substituted from \eqref{eqlambdaf}. Suppose that the sequences $(u_n)_{n=0}^{\infty}$, $(v_n)_{n=0}^{\infty}$, and $(w_n)_{n=0}^{\infty}$ are related by the recurrence relation
\begin{equation}\label{eqrecrelw}
w_0=u_0,\quad w_n = u_n + \alpha w_{n-1} + \sum_{k=0}^{n-1}(\beta v_k+\gamma w_k) \,w_{n-k-1} \ \textrm{ for } n\geq 1.
\end{equation}
Also assume $u_0>0$ and $\beta v_n+\gamma w_n\geq 0$ for each $n$.
\begin{itemize}
\item[(a)] Quantity $\lambda_W$ can be expressed in terms of $\lambda_U$ and $\lambda_V$ as
\begin{equation}\label{eqformula1a}
\lambda_W = \frac{2\lambda_U}{1-\alpha r-\beta\lambda_V +\sqrt{(1-\alpha r-\beta\lambda_V)^2 - 4\gamma\lambda_U}}.
\end{equation}
\item[(b)] If both $\mu_U$ and $\mu_V$ exist, then $\mu_W$ also exists and it is given by
\begin{equation}\label{eqformula1b}
\mu_W = \frac{\mu_U + \beta\lambda_W\mu_V}{1 - \alpha r - \beta\lambda_V - 2\gamma\lambda_W}.
\end{equation}
\end{itemize}
\end{lemma}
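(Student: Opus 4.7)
For part (a), the plan is to pass to generating functions. I will define $U(z) := \sum_{n=0}^{\infty} u_n z^{n+1}$ and analogously $V(z), W(z)$, so that $\lambda_U = U(r)$, $\lambda_V = V(r)$, $\lambda_W = W(r)$; since $0 \leq u_n, v_n, w_n \leq f_n$, Lemma~\ref{lemma01}(b) ensures absolute convergence on the disk $|z| \leq r$. Multiplying the recurrence \eqref{eqrecrelw} by $z^{n+1}$, summing for $n \geq 1$, recognizing the Cauchy convolution as a product of generating functions, and using $w_0 = u_0$, I obtain the functional equation
$$ \gamma\, W(z)^2 \,-\, \bigl(1 - \alpha z - \beta V(z)\bigr)\, W(z) \,+\, U(z) \,=\, 0. $$
Solving this quadratic and selecting the branch with $W(0) = 0$, then rationalizing so the formula also covers the degenerate case $\gamma = 0$, gives
$$ W(z) \,=\, \frac{2\,U(z)}{1 - \alpha z - \beta V(z) + \sqrt{(1 - \alpha z - \beta V(z))^2 - 4\gamma\, U(z)}}. $$
Setting $z = r$ yields \eqref{eqformula1a}. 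The denominator at $z = r$ is positive thanks to the hypothesis $\alpha r + \beta \lambda_F < 1$ together with $\lambda_V \leq \lambda_F$, and the discriminant is non-negative because $W(r)$ is a convergent series of non-negative reals, hence real.

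For part (b), my main preliminary step is a convolution lemma that I expect to prove independently: if $a_n, b_n$ satisfy $0 \leq a_n, b_n \leq f_n$ with $a_n/f_n \to A$ and $b_n/f_n \to B$, then
$$ \lim_{n \to \infty} \frac{1}{f_n} \sum_{k=0}^{n-1} a_k\, b_{n-k-1} \,=\, A\,\lambda_B + B\,\lambda_A. $$
I would prove this by splitting the sum at indices $k \in [0, N]$, $k \in [N, n{-}N{-}1]$, and $k \in [n{-}N, n{-}1]$. On the first range, the factorization $b_{n-k-1}/f_n = (b_{n-k-1}/f_{n-k-1})(f_{n-k-1}/f_n)$ combined with Lemma~\ref{lemma01}(d) produces a contribution tending to $B \sum_{k=0}^{N} a_k r^{k+1}$, which approaches $B \lambda_A$ as $N \to \infty$; by symmetry the third range contributes $A \lambda_B$. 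The middle range is controlled by bounding $a_k b_{n-k-1} \leq f_k f_{n-k-1}$ and using the basic recurrence \eqref{eqbasicrecurrence} to observe $\sum_k f_k f_{n-k-1}/f_n \to 2\lambda_F$, so that the middle-range mass must vanish in the limit (its specialization to $a = b = f$ is already exhausted by the two boundary contributions).

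With this lemma available, I divide \eqref{eqrecrelw} by $f_n$ and pass to the limit. The non-convolutional terms contribute $\mu_U + \alpha r\,\mu_W$ (using Lemma~\ref{lemma01}(d) for the $\alpha w_{n-1}/f_n$ term), while the convolution yields $\beta(\lambda_V \mu_W + \lambda_W \mu_V) + 2\gamma\, \lambda_W \mu_W$, leading to a linear equation whose solution is precisely \eqref{eqformula1b}. Solvability requires the denominator $1 - \alpha r - \beta \lambda_V - 2\gamma \lambda_W$ to be positive, which follows from the two hypotheses $\alpha r + \beta \lambda_F < 1$ and $\alpha r + (\beta + 2\gamma) \lambda_F < 1$ by splitting on the sign of $\gamma$ and using $\lambda_V, \lambda_W \leq \lambda_F$. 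The principal obstacle is actually the existence of $\mu_W$ itself, since the convolution lemma as stated presumes the ratios already converge; I plan to address this by running the argument with $\mu_W^- := \liminf w_n/f_n$ and $\mu_W^+ := \limsup w_n/f_n$ in place of $\mu_W$, deriving the same linear relation for both using the $\liminf$/$\limsup$ versions of the convolution bound, and then using the positivity of the coefficient $1 - \alpha r - \beta \lambda_V - 2\gamma \lambda_W$ to force $\mu_W^- = \mu_W^+$.
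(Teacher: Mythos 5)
Your plan follows the paper's proof closely: Part (a) via the quadratic functional equation for the generating function, and Part (b) via the three-way split of the convolution and a $\liminf$/$\limsup$ bootstrap to establish convergence of $w_n/f_n$. Two places where your sketch glides over genuine work that the paper carries out are worth flagging. First, in (a), selecting the branch of the square root by matching $W(0)=0$ does not automatically determine the sign at $z=r$: if the discriminant $\varphi(t)=(1-\alpha t-\beta V(t))^2-4\gamma U(t)$ vanished at some $t_0\in(0,r)$, the two branches would merge there and could recombine with the opposite sign beyond $t_0$. The paper excludes this for $\gamma>0$ by noting that an interior zero of $\varphi$ would have even order (since $\varphi\geq 0$), forcing $\varphi'(t_0)=0$, whereas
$$\varphi'(t)=-2\bigl(1-\alpha t-\beta v(t)\bigr)\bigl(\alpha+\beta v'(t)\bigr)-4\gamma u'(t)<0$$
on $(0,r)$; your ``non-negativity of the discriminant'' observation does not by itself rule this out. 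Second, in (b), when $\gamma<0$ the $\liminf$/$\limsup$ bounds for the boundary block $\sum_{k<m}(\beta v_{n-k-1}+\gamma w_{n-k-1})w_k$ flip direction: the upper estimate for $\limsup b_{m,n}/f_n$ picks up $\underline{\mu}_W$ and the lower estimate for $\liminf b_{m,n}/f_n$ picks up $\overline{\mu}_W$, so you do not get ``the same linear relation for both'' but rather a coupled pair, which closes only after rearranging so that the $\gamma\lambda_W$ terms cancel, leaving $(1-\alpha r-\beta\lambda_V)(\overline{\mu}_W-\underline{\mu}_W)\leq 0$. With these two details supplied, your argument is essentially identical to the paper's; in particular your convolution lemma for converging ratios is a clean way to package what the paper does inline.
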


Let us remark that $0\leq u_n,v_n,w_n\leq f_n$ actually imposes more restrictive conditions on $\alpha,\beta,\gamma$, but we do not need to perform a detailed discussion of all possible values of the given parameters.

Now we formulate the second auxiliary result.

\begin{lemma}\label{lemma03}
Suppose that the sequence $(w_n)_{n=0}^{\infty}$ is defined using sequences $(u_n)_{n=0}^{\infty}$ and $(v_n)_{n=0}^{\infty}$ as
$$ w_0=0, \quad w_n = \sum_{k=0}^{n-1} u_k v_{n-k-1} \quad\textrm{for } n\geq 1. $$
\begin{itemize}
\item[(a)] Quantity $\lambda_W$ can be expressed in terms of $\lambda_U$ and $\lambda_V$ as
\begin{equation}\label{eqformula2a}
\lambda_W = \lambda_U\lambda_V.
\end{equation}
\item[(b)] If both $\mu_U$ and $\mu_V$ exist, then $\mu_W$ also exists and it is given by
\begin{equation}\label{eqformula2b}
\mu_W = \lambda_V\mu_U+\lambda_U\mu_V.
\end{equation}
\end{itemize}
\end{lemma}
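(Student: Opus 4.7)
Part (a) should be a direct double-sum manipulation. By definition,
\begin{equation*}
\lambda_W=\sum_{n=0}^{\infty} w_n\, r^{n+1}=\sum_{n=1}^{\infty}\sum_{k=0}^{n-1} u_k\, v_{n-k-1}\, r^{n+1},
\end{equation*}
and the substitution $m=n-k-1$ converts the right-hand side into $\bigl(\sum_{k\ge 0} u_k r^{k+1}\bigr)\bigl(\sum_{m\ge 0} v_m r^{m+1}\bigr)=\lambda_U\lambda_V$. The rearrangement is legitimate by absolute convergence, since $0\le u_k,v_k\le f_k$ and Lemma~\ref{lemma01}(b) guarantees $\sum_{n\ge 0} f_n r^{n+1}<\infty$.

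For Part (b) the plan is to prove $w_n/f_n\to\lambda_V\mu_U+\lambda_U\mu_V$ via a head/tail/middle decomposition of the convolution. Fix a cutoff $N\ge 0$ and, for $n>2N+1$, write
\begin{equation*}
\frac{w_n}{f_n}=\sum_{k=0}^{N}\frac{u_k\, v_{n-k-1}}{f_n}+\sum_{k=n-N-1}^{n-1}\frac{u_k\, v_{n-k-1}}{f_n}+\sum_{k=N+1}^{n-N-2}\frac{u_k\, v_{n-k-1}}{f_n}.
\end{equation*}
In the head, each $k$ is fixed, so Lemma~\ref{lemma01}(d) gives $f_{n-k-1}/f_n\to r^{k+1}$ as $n\to\infty$, while $v_{n-k-1}/f_{n-k-1}\to\mu_V$ by hypothesis; hence the head tends to $\mu_V\sum_{k=0}^{N} u_k r^{k+1}$, and then to $\mu_V\lambda_U$ as $N\to\infty$. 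A symmetric argument, reindexing by $k'=n-k-1$, shows that the tail contributes $\mu_U\lambda_V$ in the same iterated-limit sense.

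The main obstacle is a bound on the middle sum that is uniform in $n$ for $N$ large. The plan is to dominate it via $u_k\le f_k$ and $v_m\le f_m$ by $\sum_{k=N+1}^{n-N-2} f_k f_{n-k-1}/f_n$, and then to control the latter using the basic recurrence \eqref{eqbasicrecurrence}, which yields $\sum_{k=0}^{n-1} f_k f_{n-k-1}/f_n=(1-p f_{n-1}/f_n)/q\to (1-pr)/q=2\lambda_F$ as $n\to\infty$. Running the very same head/tail analysis for the $f$-convolution shows that its head and tail together converge to $2\sum_{k=0}^{N} f_k r^{k+1}$, which exhausts $2\lambda_F$ as $N\to\infty$. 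Hence the $f$-middle, and a fortiori the $w$-middle, can be made arbitrarily small by first choosing $N$ large and then $n$ large. A standard $\varepsilon/3$ argument combining the three limits then delivers existence of $\mu_W$ and the claimed formula $\mu_W=\lambda_V\mu_U+\lambda_U\mu_V$.
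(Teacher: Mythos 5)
Your proof is correct and follows essentially the same route as the paper's: Part (a) by the Cauchy-product identity for generating functions evaluated at $z=r$, and Part (b) by the same head/tail/middle decomposition of the convolution, controlling the middle via $u_k,v_k\le f_k$ and the recurrence \eqref{eqbasicrecurrence}, then letting the cutoff go to infinity. The only cosmetic difference is that the paper phrases the final step as a $\liminf$/$\limsup$ squeeze in the cutoff $m$ rather than an explicit $\varepsilon/3$ argument.
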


After computing $\mu_W$ for some particular class $W$, one can additionally combine the numerical result with Part (c) of Lemma~\ref{lemma01} to obtain the asymptotics
$$ w_n \sim c_W \frac{(1/r)^n}{n^{3/2}}, \quad \text{as } n\to\infty $$
for some nonnegative constant $c_W$. However, we believe that the relative quantity $\mu_W$ gives a more meaningful measure of size of the class $W$ than the actual numbers $w_n$.

\section{Proofs of the combinatorial results}
\label{sectionproofs}

\begin{proof}[Proof of Lemma~\ref{lemma01}]
(a) Denote $\lambda=\sqrt{rs/q}$, so that the equality $(1-pr)^2=4qrs$ gives $\lambda=(1-pr)/2q$, and it is also straightforward to verify
\begin{equation}\label{eqbasiceqforlambda}
rs + pr\lambda + q\lambda^2 = \lambda.
\end{equation}
Let $S_m=\sum_{n=1}^{m} f_{n-1} r^n$ denote the $m$-th partial sum of the series in question. From \eqref{eqbasicrecurrence} we immediately get
$$ \sum_{n=1}^{m} f_n r^{n+1} \leq p r \sum_{n=1}^{m} f_{n-1} r^n + q \Big(\sum_{j=1}^{m} f_{j-1} z^{j}\Big)^2, $$
so that $S_{m+1} \leq rs + pr S_m + q S_m^2$. Now an easy mathematical induction in $m$ using \eqref{eqbasiceqforlambda} proves the inequality $S_m\leq\lambda$ for each nonnegative integer $m$. Taking the limit as $m\to\infty$ we establish the convergence of $\sum_{n=1}^{\infty} f_{n-1} r^n$. The calculation of the generating function $f(z)$ specialized to $z=r$ gives $f(r)=(1-pr)/2q=\lambda$, which completes the proof of Part (a).

(b) Convergence of the power series for $z=r$ implies that its radius of convergence is at least $r$.
Let us specialize the variable $z$ to the real values from the interval $(r,1/p^2 r)$ if $p\neq0$ and $(r,+\infty)$ if $p=0$. The endpoints of this interval are the solutions of $(1-pz)^2=4qsz$ and the expression \eqref{eqgeneratingf} is not a real number for any such $z$. Therefore the series does not converge on this whole interval, which implies that its radius of convergence is at most $r$. We still need to decide about the sign before the square root in \eqref{eqgeneratingf}. Since $f$ is defined as a sum of the power series $\sum_{n=1}^{\infty} f_{n-1} z^n$, it constitutes a complex-analytic function on the open disk around $0$ of radius $r$ and it is also clear that $f(0)=0$. The quadratic polynomial $(1-pz)^2-4qsz$ does not have any zeroes on that same disk, so simply by continuity of $f$ we must keep the same sign in \eqref{eqgeneratingf} on the whole disk. Plugging in $z=0$ we see that only the negative sign is possible. This proves Part (b) of the lemma.

(c) For this part we need to use a more advanced result from complex analysis. Using the Vi\`{e}te formulas for the quadratic polynomial with roots $r$ and $\frac{1}{p^2 r}$ we can write
$$ (1-pz)^2-4qsz = p^2 (z-r) \Big(z-\frac{1}{p^2 r}\Big) = \Big(1-\frac{z}{r}\Big)(1-p^2 r z), $$
so the expression for the generating function can be rewritten as
\begin{equation}\label{eqfgenas}
f(z) = \frac{-1}{2q}(1-p^2 r z)^{1/2} \Big(1-\frac{z}{r}\Big)^{1/2} + \frac{1-pz}{2q} .
\end{equation}
Now we use the following result.

\begin{theorem}[from \cite{FO}]\label{theoremaux}
Suppose that the generating function $a$ of a sequence $(a_n)_{n=0}^{\infty}$ has $r>0$ as the radius of convergence and that it is complex-analytic in the domain
$$ \Omega:=\big\{z\in\mathbb{C}\setminus\{r\} \,:\, |z|<r+\epsilon,\ |\mathrm{Arg}(z-r)|>\psi\big\} $$
for some $\epsilon>0$ and $0<\psi<\pi/2$. Here $\mathrm{Arg}$ denotes the principal argument function defined by $\mathrm{Arg}(\rho e^{i\phi}):=\phi$ for $\rho>0$ and $\phi\in(-\pi,\pi]$. Also suppose that
$$ a(z) = b(z)\Big(1-\frac{z}{r}\Big)^{\theta} + c(z) $$
for some $\theta\in\mathbb{R}\setminus\{0,1,2,\ldots\}$ and for some complex-analytic functions $b$ and $c$ on a neighborhood of $z=r$. If
$$ K := \lim_{\Omega\ni z\to r} b(z)\neq 0, $$
then
$$ a_n \sim \frac{K(1/r)^n}{\mathrm{\Gamma}(-\theta)n^{\theta+1}} \quad\text{as } n\to\infty. $$
\end{theorem}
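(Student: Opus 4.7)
The plan is to prove this by singularity analysis, via Cauchy's coefficient formula combined with a Hankel contour deformation around the isolated singularity at $z=r$. Since $a$ is complex-analytic on the slit domain $\Omega$, which strictly contains the open disk $|z|<r$, one has
$$ a_n = \frac{1}{2\pi i}\oint_{\gamma}\frac{a(z)}{z^{n+1}}\,dz $$
for any positively oriented simple closed contour $\gamma\subset\Omega$ enclosing $0$. I would deform $\gamma$ to a ``Hankel-type'' contour $\gamma_n$ that hugs $z=r$: two line segments approaching $r$ at angles $\pm\psi'$ for some fixed $\psi<\psi'<\pi/2$, joined by a tiny arc of radius $1/n$ about $r$, and closed by an arc on the circle $|z-r|=\epsilon/2$. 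The strict inequality $\psi'<\pi/2$ is what keeps $\gamma_n$ inside $\Omega$.

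Next, split the integrand using the given decomposition $a(z)=b(z)(1-z/r)^{\theta}+c(z)$. Since $c$ is analytic on a full neighborhood of $r$, the portion of $\gamma_n$ near $r$ can be pushed outside the circle $|z|=r$ by a small amount without crossing any singularity of $c$, yielding $[z^n]c(z)=O\bigl((r+\delta)^{-n}\bigr)$ for some $\delta>0$. This is exponentially dominated by the target $(1/r)^n/n^{\theta+1}$, and hence negligible.

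For the singular part, I would carry out the substitution $z=r(1+t/n)$, which maps $\gamma_n$ onto a rescaled contour approximating the canonical infinite Hankel contour $\mathcal{H}$ in the $t$-plane. Under this change of variables,
$$ z^{-n-1}\longrightarrow r^{-n-1}e^{-t},\qquad (1-z/r)^{\theta}=(-t/n)^{\theta},\qquad b(z)\longrightarrow K, $$
with the last limit uniform on compact parts of $\mathcal{H}$ by analyticity of $b$ at $r$. A dominated-convergence argument then yields
$$ [z^n]\bigl(b(z)(1-z/r)^{\theta}\bigr)\sim \frac{K(1/r)^n}{n^{\theta+1}}\cdot\frac{1}{2\pi i}\int_{\mathcal{H}}(-t)^{\theta}e^{-t}\,dt. $$
The remaining integral is precisely the classical Hankel representation of $1/\Gamma(-\theta)$, valid exactly when $\theta\notin\{0,1,2,\ldots\}$, and this produces the stated formula.

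The main obstacle will be rigorously justifying the limit interchange inside the change-of-variables step: one must exhibit a uniform integrable dominating function for the integrand along the rescaled contour, and simultaneously control the tails of $\mathcal{H}$ (where $|t|$ grows) together with the transition to the outer arc of $\gamma_n$. This technical bookkeeping, in which the aperture condition $\psi'<\pi/2$ and the analytic continuation of $a$ past the circle $|z|=r$ outside the point $z=r$ are both essential, is the heart of the Flajolet--Odlyzko transfer theorem, and is where all the hypotheses on $\Omega$ get used.
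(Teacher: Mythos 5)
The paper does not supply a proof of this theorem at all: immediately after the statement it records that the result is merely a reformulation of Corollary~2 of Flajolet--Odlyzko \cite{FO} --- Part~(i) of that corollary handles the term $b(z)(1-z/r)^{\theta}$ and Part~(ii) absorbs $c(z)$, after rescaling from the normalization $r=1$ to general $r$ --- and leaves the underlying singularity analysis as an external citation. Your proposal, by contrast, re-derives the Flajolet--Odlyzko transfer theorem from scratch via Cauchy's coefficient formula, a Hankel-type contour deformation, the rescaling $z=r(1+t/n)$, and the Hankel integral representation of $1/\Gamma(-\theta)$, which is exactly the machinery hidden inside \cite{FO}. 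The sketch is correct in outline and correctly identifies where each hypothesis enters: the aperture $\psi<\psi'<\pi/2$, the analytic continuation past $|z|=r$ away from $z=r$, and the exclusion $\theta\notin\{0,1,2,\ldots\}$. Two imprecisions are worth repairing. First, $\gamma_n$ as you describe it (two rays emanating from near $r$, a $1/n$-arc around $r$, closed by an arc on $|z-r|=\epsilon/2$) does not enclose the origin when $\epsilon/2<r$ and so cannot replace $\gamma$ in Cauchy's formula; the correct contour is the circle $|z|=r+\epsilon/2$ centered at $0$, indented near $z=r$ by the two rays and the small $1/n$-arc, with the outer circular part supplying the exponentially small $O\bigl((r+\delta)^{-n}\bigr)$ remainder. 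Second, $\psi'<\pi/2$ is used not merely to stay inside $\Omega$ (for which $\psi'>\psi$ suffices) but to ensure $\operatorname{Re}(t)\to+\infty$ along the rescaled rays, so that $e^{-t}$ is integrable and the tails of the Hankel contour are controllable in the dominated-convergence step. With those two points tightened, your argument is precisely the proof the paper deliberately outsources to \cite{FO}; what the paper's approach buys is brevity and a clean modular separation (apply Part~(i) to the singular summand, Part~(ii) to the regular one), while yours makes the contour analysis explicit at the cost of carrying out the full singularity-analysis bookkeeping.
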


This result is merely a reformulation of \cite[Corollary~2]{FO}: One handles the term $b(z)(1-z/r)^{\theta}$ by applying Part (i) with $\alpha=\theta$, while the term $c(z)$ is handled using Part (ii) with $\alpha=0$. Actually, the results in \cite{FO} are formulated for $r=1$ only, which is not a loss of generality by rescaling.

Applying Theorem~\ref{theoremaux} to \eqref{eqfgenas} with $\theta=1/2$ we get
$$ f_{n-1} \sim \frac{-(1/2q)(1-p^2 r^2)^{1/2}(1/r)^n}{\mathrm{\Gamma}(-1/2)\,n^{3/2}} \quad\text{as } n\to\infty. $$
The last expression easily transforms into the one stated in the lemma by replacing $n$ with $n+1$ and using
\,$\mathrm{\Gamma}(-1/2)=-2\sqrt{\pi}$, \,$q=(1-pr)^2/4rs$, \,$(n+1)^{3/2}\sim n^{3/2}$.

(d) This part is an immediate consequence of Part (c) and $(n-k)^{3/2}\sim n^{3/2}$ as $n\to\infty$:
$$ \frac{f_{n-k}}{f_n} \sim \frac{(1/r)^{n-k}(n-k)^{-3/2}}{(1/r)^n n^{-3/2}} \sim r^k. $$
\end{proof}

\smallskip
The generating functions of the sequences in question will be denoted using the same lowercase letters. From Lemma \ref{lemma01}(a) we know that they converge absolutely on the interval $[0,r]$.

\begin{proof}[Proof of Lemma~\ref{lemma02}]
(a) This part is once again quite standard, but we include it for completeness. Suppose $\gamma\neq 0$ as the other case is straightforward. Let us multiply the recurrence relation in \eqref{eqrecrelw} by $z^{n+1}$ and sum over $n\geq 1$. That way we obtain
\begin{align*}
\sum_{n=1}^{\infty} w_n z^{n+1} & = \sum_{n=1}^{\infty} u_n z^{n+1} + \alpha z \sum_{n=1}^{\infty} w_{n-1} z^n \\
& \quad + \sum_{n=1}^{\infty}\sum_{k=0}^{n-1} (\beta v_k z^{k+1} +\gamma w_k z^{k+1}) \,w_{n-k-1}z^{n-k}.
\end{align*}
Adding $u_0 z$ to both sides, this equality can be rewritten as
\begin{equation}\label{eqequforw}
w(z) = u(z) + \alpha z w(z) + \big(\beta v(z)+\gamma w(z)\big)w(z).
\end{equation}
Solving the quadratic equation for $w(z)$ and specializing to the real values of the variable $z$, more precisely $z=t\in[0,r]$, gives
\begin{equation}\label{eqgenfun}
w(t) = \frac{1-\alpha t-\beta v(t)\pm\sqrt{(1-\alpha t-\beta v(t))^2-4\gamma u(t)}}{2\gamma}.
\end{equation}
Similarly as in the previous lemma we have to decide about the sign before the square root in the above formula for $w$, but the argument will be slightly more complicated, as the function under the square root sign need not be a polynomial anymore.

Recall that we have
$$ 0\leq u(t) = \sum_{n=0}^{\infty} u_n t^{n+1} \leq \sum_{n=0}^{\infty} f_n r^{n+1} = \lambda_F \ \textrm{ for }t\in[0,r] $$
and
$$ u'(t) = \sum_{n=0}^{\infty} n u_n t^n >0 \ \textrm{ for }t\in(0,r), $$
since the power series can be differentiated term-by-term on its disk of convergence; see \cite[Chapter~8]{Rudin}. The same is true for $v$ and $w$. If $\gamma<0$, then only the negative sign in \eqref{eqgenfun} will yield nonnegative values of $w(t)$, so we do not really have a choice. Therefore we assume $\gamma>0$. Taking $t=0$ and using $u(0)=v(0)=w(0)=0$, we see that only the minus sign in \eqref{eqgenfun} is possible in a neighborhood of $t=0$. We claim that the same formula holds for all $0\leq t\leq r$. By continuity of $w$, the sign before the square root could only change at a point $t_0\in(0,r)$ such that $\varphi(t_0)=0$, where
$$ \varphi(t) := \big(1-\alpha t-\beta v(t)\big)^2-4\gamma u(t). $$
Since the quadratic equation for $w(t)$ must have a real solution, we know that its discriminant is nonnegative, i.e.\@ $\varphi(t)\geq 0$ for $0\leq t\leq r$. However, $\varphi$ extends to a complex-analytic function on a disk around $0$ with radius $r$ and $t_0$ is its root of even order, which implies $\varphi'(t_0)=0$. On the other hand, from the expression for $\varphi$ we obtain
$$ \varphi'(t) = -2\big(\underbrace{1-\alpha t-\beta v(t)}_{>0}\big)\big(\underbrace{\alpha+\beta v'(t)}_{\geq 0}\big)-4\gamma\underbrace{u'(t)}_{>0} < 0 $$
for each $0<t<r$, which is a contradiction. Note that we have used
$$ \alpha t + \beta v(t) < \alpha r + \beta \lambda_F < 1 . $$
We conclude that only the minus sign is possible in \eqref{eqgenfun}. Equation \eqref{eqformula1a} is obtained by letting $t=r$ and rewriting \eqref{eqgenfun} so that the square root ends up in the denominator. It is trivial to verify the same formula when $\gamma=0$, by solving the linear equation \eqref{eqequforw} for $w(z)$ and plugging in $z=r$.

(b) The main difficulty is that we do not yet know the existence of the limit $\lim_{n\to\infty}\limits w_n/f_n$. However the limit inferior and the limit superior of a bounded sequence can be defined as the smallest and the largest cluster point of the sequence (see \cite[Chapter~3]{Rudin}) and they are always well-defined real numbers. For that reason we denote
\begin{equation}\label{eqliminflimsup}
\underline{\mu}_W:=\liminf_{n\to\infty}\frac{w_n}{f_n}, \quad \overline{\mu}_W:=\limsup_{n\to\infty}\frac{w_n}{f_n}.
\end{equation}
From $0\leq w_n\leq f_n$ we know that $0\leq\underline{\mu}_W\leq\overline{\mu}_W\leq 1$. Our goal is to show $\underline{\mu}_W=\overline{\mu}_W$ and to evaluate this common value.

Let us take positive integers $m,n$ such that $n>2m$ and split the sum in the defining recurrence relation \eqref{eqrecrelw} into three parts:
\begin{align*}
w_n & = u_n + \alpha w_{n-1} + \underbrace{\sum_{k=0}^{m-1} w_{n-k-1} (\beta v_k + \gamma w_k)}_{a_{m,n}}
+ \underbrace{\sum_{k=0}^{m-1} (\beta v_{n-k-1} + \gamma w_{n-k-1}) w_k}_{b_{m,n}} \\[-2mm]
& \quad + \underbrace{\sum_{k=0}^{n-2m-1} (\beta v_{m+k} + \gamma w_{m+k}) w_{n-m-k-1}}_{c_{m,n}}.
\end{align*}
Informally, we extract $m$ summands from each side of the summation range to constitute ``dominant parts'' $a_{m,n}$ and $b_{m,n}$, while the remainder $c_{m,n}$ will be estimated as comparably small.

Dividing by $f_n$ we obtain the splitting
\begin{equation}\label{eqsplitting}
\frac{w_n}{f_n} = \frac{u_n}{f_n} + \frac{\alpha w_{n-1}}{f_n} + \frac{a_{m,n}}{f_n} + \frac{b_{m,n}}{f_n} + \frac{c_{m,n}}{f_n}.
\end{equation}
Using Part (d) of Lemma~\ref{lemma01} and basic properties of $\liminf$ and $\limsup$ we obtain
{\allowdisplaybreaks\begin{align*}
& \liminf_{n\to\infty}\frac{w_{n-k-1}}{f_n} = \liminf_{n\to\infty}\Big(\frac{f_{n-k-1}}{f_n}\frac{w_{n-k-1}}{f_{n-k-1}}\Big) \\
& \qquad\qquad\qquad\ \  = \lim_{n\to\infty}\!\frac{f_{n-k-1}}{f_n} \,\liminf_{n\to\infty}\frac{w_{n-k-1}}{f_{n-k-1}} = r^{k+1} \underline{\mu}_W, \\
& \limsup_{n\to\infty}\frac{w_{n-k-1}}{f_n} = r^{k+1} \overline{\mu}_W, \\
& \liminf_{n\to\infty}\frac{\beta v_{n-k-1}+\gamma w_{n-k-1}}{f_n} =
\left\{\begin{array}{cl} r^{k+1} (\beta\mu_V + \gamma\overline{\mu}_W) & \textrm{ if } \gamma<0, \\
r^{k+1} (\beta\mu_V + \gamma\underline{\mu}_W) & \textrm{ if } \gamma\geq 0, \end{array}\right. \\
& \limsup_{n\to\infty}\frac{\beta v_{n-k-1}+\gamma w_{n-k-1}}{f_n} =
\left\{\begin{array}{cl} r^{k+1} (\beta\mu_V + \gamma\underline{\mu}_W) & \textrm{ if } \gamma<0, \\
r^{k+1} (\beta\mu_V + \gamma\overline{\mu}_W) & \textrm{ if } \gamma\geq 0. \end{array}\right.
\end{align*}}
Consequently,
\begin{equation}\label{eqextraest}
\alpha r\underline{\mu}_W \leq\liminf_{n\to\infty}\frac{\alpha w_{n-1}}{f_n} \leq \limsup_{n\to\infty}\frac{\alpha w_{n-1}}{f_n} \leq \alpha r\overline{\mu}_W
\end{equation}
and
\begin{align}
& \underline{\mu}_W\sum_{k=0}^{m-1} (\beta v_k + \gamma w_k)r^{k+1}
\leq\liminf_{n\to\infty}\frac{a_{m,n}}{f_n} \nonumber \\[-1mm]
& \leq \limsup_{n\to\infty}\frac{a_{m,n}}{f_n}
\leq \overline{\mu}_W\sum_{k=0}^{m-1} (\beta v_k + \gamma w_k)r^{k+1}. \label{eqpest}
\end{align}
Similarly, for $\gamma<0$ we have
\begin{align}
& (\beta\mu_V + \gamma\overline{\mu}_W)\sum_{k=0}^{m-1} w_k r^{k+1}
\leq\liminf_{n\to\infty}\frac{b_{m,n}}{f_n} \nonumber \\[-1mm]
& \leq \limsup_{n\to\infty}\frac{b_{m,n}}{f_n}
\leq (\beta\mu_V + \gamma\underline{\mu}_W)\sum_{k=0}^{m-1} w_k r^{k+1}, \label{eqqest1}
\end{align}
while for $\gamma>0$
\begin{align}
& (\beta\mu_V + \gamma\underline{\mu}_W)\sum_{k=0}^{m-1} w_k r^{k+1}
\leq\liminf_{n\to\infty}\frac{b_{m,n}}{f_n} \nonumber \\[-1mm]
& \leq \limsup_{n\to\infty}\frac{b_{m,n}}{f_n}
\leq (\beta\mu_V + \gamma\overline{\mu}_W)\sum_{k=0}^{m-1} w_k r^{k+1}. \label{eqqest2}
\end{align}
It remains to estimate $\frac{c_{m,n}}{f_n}$ using the recurrence relation \eqref{eqbasicrecurrence} as
{\allowdisplaybreaks\begin{align*}
0\leq \frac{c_{m,n}}{f_n} & \leq \frac{\beta+|\gamma|}{f_n}\sum_{k=0}^{n-2m-1}f_{m+k}f_{n-m-k-1} \\
& = \frac{\beta+|\gamma|}{q f_n}\bigg(f_n - p f_{n-1} - 2q\sum_{k=0}^{m-1}f_{k}f_{n-k-1}\bigg) \\
& = \frac{\beta+|\gamma|}{q}\bigg(1 - \frac{p f_{n-1}}{f_n} - 2q\sum_{k=0}^{m-1} \frac{f_{n-k-1}}{f_n}f_k\bigg),
\end{align*}}
so from Lemma \ref{lemma01}(d)
\begin{equation}\label{eqremainder}
\limsup_{n\to\infty}\frac{c_{m,n}}{f_n} \leq \frac{\beta+|\gamma|}{q}\bigg(1 - pr - 2q\sum_{k=0}^{m-1} f_k r^{k+1}\bigg).
\end{equation}

Now we distinguish two cases.

\emph{Case $\gamma<0$.} Combining \eqref{eqsplitting}, \eqref{eqextraest}, \eqref{eqpest}, \eqref{eqqest1}, and \eqref{eqremainder} gives us
\begin{align*}
\overline{\mu}_W & \leq \mu_U + \limsup_{n\to\infty}\frac{\alpha w_{n-1}}{f_n} + \limsup_{n\to\infty}\frac{a_{m,n}}{f_n} + \limsup_{n\to\infty}\frac{b_{m,n}}{f_n} + \limsup_{n\to\infty}\frac{c_{m,n}}{f_n} \\
& \leq \mu_U + \alpha r\overline{\mu}_W + \overline{\mu}_W\sum_{k=0}^{m-1} (\beta v_k + \gamma w_k)r^{k+1}
+  (\beta\mu_V + \gamma\underline{\mu}_W)\sum_{k=0}^{m-1} w_k r^{k+1} \\
& \quad + \frac{\beta+|\gamma|}{q}\bigg(1 - pr - 2q\sum_{k=0}^{m-1} f_k r^{k+1}\bigg)
\end{align*}
and similarly
$$ \underline{\mu}_W \geq \mu_U + \alpha r\underline{\mu}_W + \underline{\mu}_W\sum_{k=0}^{m-1} (\beta v_k + \gamma w_k)r^{k+1}
+ (\beta\mu_V + \gamma\overline{\mu}_W)\sum_{k=0}^{m-1} w_k r^{k+1}. $$
From \eqref{eqlambdaf} we get
$$ 1 - pr - 2q\sum_{k=0}^{\infty} f_k r^{k+1} = 1 - pr - 2q\lambda_F = 0, $$
so letting $m\to\infty$ we end up with a system of inequalities:
\begin{align*}
\overline{\mu}_W & \leq \mu_U + \alpha r\overline{\mu}_W + \big(\beta \lambda_V+\gamma \lambda_W\big)\overline{\mu}_W + \lambda_W(\beta\mu_V + \gamma\underline{\mu}_W), \\
\underline{\mu}_W & \geq \mu_U + \alpha r\underline{\mu}_W + \big(\beta \lambda_V+\gamma \lambda_W\big)\underline{\mu}_W + \lambda_W(\beta\mu_V + \gamma\overline{\mu}_W),
\end{align*}
which can in turn be rewritten as
$$ \big(1-\alpha r-\beta \lambda_V\big)\overline{\mu}_W \leq \mu_U + \lambda_W \big(\beta\mu_V + \gamma \underline{\mu}_W + \gamma \overline{\mu}_W\big) \leq \big(1-\alpha r-\beta \lambda_V\big)\underline{\mu}_W. $$
One of the conditions on the coefficients $\alpha,\beta$ gives
$$ 1-\alpha r-\beta \lambda_V \geq 1-\alpha r-\beta \lambda_F > 0, $$
so dividing by this number we conclude $\overline{\mu}_W\leq\underline{\mu}_W$, i.e.\@ $\overline{\mu}_W=\underline{\mu}_W$, and also that the above inequalities become equalities. Therefore the share $\mu_W$ exists and it satisfies the equation
$$ \mu_W = \mu_U + \alpha r\mu_W + \big(\beta \lambda_V+\gamma \lambda_W\big)\mu_W + \lambda_W(\beta\mu_V + \gamma\mu_W), $$
which easily transforms into \eqref{eqformula1b}.

\emph{Case $\gamma>0$.}
In exactly the same way as in the previous case inequalities \eqref{eqextraest}, \eqref{eqpest}, \eqref{eqqest2}, and \eqref{eqremainder} imply
\begin{align*}
\overline{\mu}_W & \leq \mu_U + \alpha r\overline{\mu}_W + \big(\beta \lambda_V+\gamma \lambda_W\big)\overline{\mu}_W + \lambda_W(\beta\mu_V + \gamma\overline{\mu}_W), \\
\underline{\mu}_W & \geq \mu_U + \alpha r\underline{\mu}_W + \big(\beta \lambda_V+\gamma \lambda_W\big)\underline{\mu}_W + \lambda_W(\beta\mu_V + \gamma\underline{\mu}_W).
\end{align*}
If we rewrite these estimates as
$$ \big(1-\alpha r-\beta \lambda_V-2\gamma\lambda_W\big)\overline{\mu}_W \leq \mu_U + \beta \lambda_W\mu_V \leq \big(1-\alpha r-\beta \lambda_V-2\gamma\lambda_W\big)\underline{\mu}_W, $$
then with an aid of
$$ 1-\alpha r-\beta \lambda_V-2\gamma\lambda_W \geq 1-\alpha r-(\beta+2\gamma)\lambda_F >0 $$
we arrive at $\overline{\mu}_W=\underline{\mu}_W$ and \eqref{eqformula1b} once again.
\end{proof}

\smallskip
The proof of Lemma~\ref{lemma03} is much shorter, but it follows the same basic idea.

\begin{proof}[Proof of Lemma~\ref{lemma03}]
(a) This part is a direct consequence of
$$ w(z)=\sum_{n=1}^{\infty}\sum_{k=0}^{n-1} u_k z^{k+1} v_{n-k-1} z^{n-k} = \Big(\sum_{k=0}^{\infty}u_k z^{k+1}\Big) \Big(\sum_{l=0}^{\infty}v_l z^{l+1}\Big) = u(z) v(z). $$

(b) Imitating the proof of Lemma~\ref{lemma02} we split
$$ \sum_{k=0}^{n-1} u_k v_{n-k-1} = \underbrace{\sum_{k=0}^{m-1} u_{n-k-1} v_k}_{a_{m,n}} + \underbrace{\sum_{k=0}^{m-1} v_{n-k-1} u_k}_{b_{m,n}} + \underbrace{\sum_{k=0}^{n-2m-1} u_{m+k} v_{n-m-k-1}}_{c_{m,n}}. $$
Once again we have denoted dominant parts of the sum by $a_{m,n}$ and $b_{m,n}$ and the remainder by $c_{m,n}$.
Exactly as before we easily derive:
{\allowdisplaybreaks\begin{align*}
& \lim_{n\to\infty}\frac{u_{n-k-1}}{f_n} = \mu_U r^{k+1},\quad \lim_{n\to\infty}\frac{v_{n-k-1}}{f_n} = \mu_V r^{k+1}, \\
& \lim_{n\to\infty}\frac{a_{m,n}}{f_n} = \mu_U\sum_{k=0}^{m-1} v_k r^{k+1},\quad \lim_{n\to\infty}\frac{b_{m,n}}{f_n} = \mu_V\sum_{k=0}^{m-1} u_k r^{k+1}, \\
& \limsup_{n\to\infty}\frac{c_{m,n}}{f_n} \leq \frac{1}{q}\bigg(1 - pr - 2q\sum_{k=0}^{m-1} f_k r^{k+1}\bigg),
\end{align*}}
which in turn implies
\begin{align*}
\limsup_{n\to\infty}\frac{w_n}{f_n} & \leq \mu_U\!\sum_{k=0}^{m-1}\! v_k r^{k+1} + \mu_V\!\sum_{k=0}^{m-1}\! u_k r^{k+1} + \frac{1}{q}\bigg(1 - pr - 2q\!\sum_{k=0}^{m-1}\! f_k r^{k+1}\bigg), \\
\liminf_{n\to\infty}\frac{w_n}{f_n} & \geq \mu_U\sum_{k=0}^{m-1} v_k r^{k+1} + \mu_V\sum_{k=0}^{m-1} u_k r^{k+1}.
\end{align*}
By letting $m\to\infty$ we obtain that the common value of the lower limit and the upper limit of $(w_n/f_n)_{n=0}^{\infty}$ equals
\begin{equation*}\mu_U\sum_{k=0}^{\infty} v_k r^{k+1} + \mu_V\sum_{k=0}^{\infty} u_k r^{k+1} = \mu_U\lambda_V + \mu_V\lambda_U.
\end{equation*}
\end{proof}

\smallskip
Finally, we can return to our rather general result on classes of formulas generated by certain production rules.

\begin{proof}[Proof of Theorem~\ref{theorem01}]
Throughout the proof we will denote $t_n=\#\mathcal{T}_n$, $u_n=\#\mathcal{U}_n$, $v_n=\#\mathcal{V}_n$, $w_n=\#\mathcal{W}_n$.

(a) This part is an immediate consequence of $w_n=u_n+v_n$.

(b) The production rules lead to the recurrence relation
$$ w_n = u_n + p'w_{n-1} + q'\sum_{k=0}^{n-1}w_k w_{n-k-1} + q''\sum_{k=0}^{n-1}(v_k-w_k)w_{n-k-1} $$
with the initial condition $w_0 = u_0 \geq 1$. We want to apply Lemma~\ref{lemma02} with $\alpha=p'$, $\beta=q''$, and $\gamma=q'-q''$. Since the condition $q''v_n+(q'-q'')w_n\geq 0$ is obvious, we only need to verify the requirements on the coefficients $\alpha,\beta,\gamma$. We have
$$ \alpha r + \beta\lambda_F = p'r + q''\frac{1-pr}{2q} \leq pr + \frac{1-pr}{2} < 1 $$
and
$$ \alpha r + (\beta+2\gamma)\lambda_F = p'r + (2q'-q'')\frac{1-pr}{2q} \leq pr + 2q\frac{1-pr}{2q} = 1 . $$
The last inequality is strict unless $(p',q',q'')=(p,q,0)$, which was not allowed in the statement of Theorem~\ref{theorem01}. Therefore, Lemma~\ref{lemma02} applies and we simply need to use Formulas~\eqref{eqformula1a} and \eqref{eqformula1b}.

(c) This time the productions give
$$ w_0=s',\quad w_n = p't_{n-1} + q'\sum_{k=0}^{n-1}u_k v_{n-k-1}\textrm{ for }n\geq 1 $$
and we are going to use Lemma~\ref{lemma03}. Writing
$$ \sum_{n=0}^{\infty} w_n r^{n+1} = s'r + p'r\sum_{n=1}^{\infty} t_{n-1}r^n + q'\sum_{n=1}^{\infty}\bigg(\sum_{k=0}^{n-1}u_k v_{n-k-1}\bigg)r^{n+1} $$
and applying \eqref{eqformula2a} to the last term we obtain
$$ \lambda_W = s'r + p'r\lambda_T + q'\lambda_U\lambda_V. $$
Also, applying Lemma~\ref{lemma01}(d) to the first and Formula~\eqref{eqformula2b} to the second term in
$$ \frac{w_n}{f_n} = p'\frac{f_{n-1}}{f_n}\frac{t_{n-1}}{f_{n-1}} + q'\frac{1}{f_n}\sum_{k=0}^{n-1}u_k v_{n-k-1} $$
we conclude that $\mu_W$ exists and
$$ \mu_W = p'r\mu_T + q'(\lambda_V\mu_U+\lambda_U\mu_V). $$

(d) From the production rules we know $w_0=u_0$, $w_1=u_1+p'w_0$, and
$$ w_n = u_n + p'w_{n-1} + q'w_{n-2}\textrm{ for } n\geq 2. $$
Summing in $n$ we obtain
$$ \sum_{n=2}^{\infty}w_n r^{n+1} = \sum_{n=2}^{\infty}u_n r^{n+1} + p'r\sum_{n=2}^{\infty}w_{n-1} r^n + q'r^2\sum_{n=2}^{\infty}w_{n-2}r^{n-1}, $$
i.e.
$$ \lambda_W - u_0 r - (u_1+p'u_0)r^2 = \lambda_U - u_0 r - u_1 r^2 + p'r(\lambda_W - u_0 r) + q'r^2\lambda_W, $$
which in turn simplifies as
$$ (1-p'r-q'r^2) \lambda_W = \lambda_U. $$
We are allowed to divide by $1-p'r-q'r^2$ because
$$ p'r + q'r^2 < (p+q)r < (p+q)\frac{1}{p+2q} < 1. $$
Finally, utilizing
$$ \frac{w_n}{f_n} = \frac{u_n}{f_n} + p'\frac{f_{n-1}}{f_n}\frac{w_{n-1}}{f_{n-1}} + q'\frac{f_{n-2}}{f_n}\frac{w_{n-2}}{f_{n-2}} $$
and Lemma~\ref{lemma01}(d) we derive
\begin{align*}
\overline{\mu}_W & \leq \mu_U + p'r\overline{\mu}_W + q'r^2\overline{\mu}_W, \\
\underline{\mu}_W & \geq \mu_U + p'r\underline{\mu}_W + q'r^2\underline{\mu}_W,
\end{align*}
where we use the notation \eqref{eqliminflimsup} once again. Analogously as in the proof of Lemma~\ref{lemma02} we conclude
$$ \underline{\mu}_W = \overline{\mu}_W = \frac{\mu_U}{1-p'r-q'r^2}. $$
\end{proof}

For the purposes of applications it will sometimes be useful to interpret Theorem~\ref{theorem01}(a) as:
$$ \lambda_V = \lambda_W - \lambda_U,\quad \mu_V = \mu_W - \mu_U. $$

\section{Calculating the shares}
\label{sectioncalculations}

\begin{proof}[Proof of Theorem~\ref{theorem02}]
(a) In this case $p=0$, $q=2$, $s=1$, and consequently $r=1/8$, $\lambda_F=1/4$, $\mu_F=1$ by Formulas~\eqref{eqexpressionforr} and \eqref{eqlambdaf}.

\emph{Negative, direct, and cross formulas}.
We begin by discussing negative formulas $N$, which are given by
$$ N \,::=\, \bot \ | \ (F\setminus N) \rightarrow N, $$
so we can apply Theorem~\ref{theorem01}(b) with
$$ p'=0,\ q'=0,\ q''=1,\ U=\{\bot\},\ V=F. $$
Since $\lambda_U=r=1/8$, $\mu_U=0$, $\lambda_V=\lambda_F=1/4$, $\mu_V=\mu_F=1$, we obtain
\begin{align*}
\lambda_N & = \frac{2r}{1-\lambda_F + \sqrt{(1-\lambda_F)^2+4r}} = \frac{1}{3+\sqrt{17}} = 0.14038\ldots, \\
\mu_N & = \frac{\lambda_N \mu_F}{1 - \lambda_F + 2\lambda_N} = \frac{1}{2}-\frac{3}{2\sqrt{17}} = 0.13619\ldots.
\end{align*}
Therefore, approximately $14\%$ of all \ILC\ formulas are negative. It is interesting to remark that the analogues of negative formulas when $p=0$, $q=1$, $s=1$ constitute the sequence A055113 in \cite{OEIS} and are ``counted'' in Examples 11.3 and 11.31 of the textbook \cite{BenWill}. The reader can compare our technique with explicit manipulations with a quite complicated generating function presented in \cite{BenWill}.

Direct formulas $M$ are really similar to negative ones, in the sense that $\mathcal{M}_0=\emptyset$ and $\mathcal{M}_n=\mathcal{N}_n$ for $n\geq 1$. Therefore,
\begin{align*}
\lambda_M & = \lambda_N - r = \frac{8\lambda_N-1}{8} = 0.01538\ldots, \\
\mu_M & = \mu_N = 0.13619\ldots.
\end{align*}

Finally, cross formulas $X$ are of the form $M\rightarrow\bot$ for some direct formula $M$ and counting them is the same as counting $\diamond M$ for some unary operator $\diamond$. Thus, Theorem~\ref{theorem01}(c) applies with
$$ p'=1,\ q'=0,\ s'=0,\ T=M,\ U=V=\emptyset $$
and gives
\begin{align*}
\lambda_X & = r\lambda_M = \frac{8\lambda_N-1}{64} = 0.00192\ldots, \\
\mu_X & = r\mu_M = \frac{\mu_N}{8} = 0.01702\ldots.
\end{align*}

\emph{Basic locally \GL-like formulas}.
Let us compute shares of each of the five classes of formulas mentioned in Theorem~\ref{theoremmc}.
\begin{itemize}
\item
Formulas of the form $\bot\rhd F$. We apply Theorem~\ref{theorem01}(c) with
$$ p'=1,\ q'=0,\ s'=0,\ T=F,\ U=V=\emptyset $$
to get
$$ \lambda(\bot\rhd F) = r\lambda_F = \frac{1}{32} \quad\textrm{and}\quad \mu(\bot\rhd F) = r\mu_F = \frac{1}{8}. $$

\item
Formulas of the form $F\rhd A$. This time we can make use of Theorem~\ref{theorem01}(c) by choosing
$$ p'=0,\ q'=1,\ s'=0,\ T=\emptyset,\ U=F,\ V=A=F\setminus N, $$
so
\begin{align*}
\lambda(F\rhd A) & = \lambda_F \big(\lambda_F-\lambda_N\big) = \frac{1-4\lambda_N}{16}, \\
\mu(F\rhd A) & = (\lambda_F-\lambda_N)\mu_F+\lambda_F(\mu_F-\mu_N) = \frac{2-4\lambda_N-\mu_N}{4}.
\end{align*}

\item
Formulas of the form $A\rhd\bot$. Theorem~\ref{theorem01}(c) with
$$ p'=1,\ q'=0,\ s'=0,\ T=A=F\setminus N,\ U=V=\emptyset $$
gives
$$ \lambda(A\rhd\bot) = r(\lambda_F-\lambda_N) = \frac{1-4\lambda_N}{32},\quad \mu(A\rhd\bot) = r(\mu_F-\mu_N) = \frac{1-\mu_N}{8}. $$

\item
Formulas of the form $N\rhd X$. Theorem~\ref{theorem01}(c) applies again, this time with
$$ p'=0,\ q'=1,\ s'=0,\ T=\emptyset,\ U=N,\ V=X, $$
so
\begin{align*}
\lambda(N\rhd X) & = \lambda_N \lambda_X = \frac{\lambda_N (8\lambda_N -1)}{64}, \\
\mu(N\rhd X) & = \lambda_X \mu_N + \lambda_N \mu_X = \frac{(16\lambda_N -1)\mu_N}{64}.
\end{align*}

\item
Formulas of the form $X\rhd\bot$. Finally, we take
$$ p'=1,\ q'=0,\ s'=0,\ T=X,\ U=V=\emptyset $$
in Theorem~\ref{theorem01}(c):
$$ \lambda(X\rhd\bot) = r\lambda_X = \frac{8\lambda_N -1}{8^3} \quad\textrm{and}\quad \mu(X\rhd\bot) = r\mu_X = \frac{\mu_N}{64}. $$
\end{itemize}

Unfortunately, counting basic locally \GL-like formulas is not just summing the cardinalities of classes from Theorem~\ref{theoremmc}, because those classes are not disjoint. However, they are almost disjoint: The only pairs having nonempty intersection are $\bot\rhd F$ and $F\rhd A$, and $\bot\rhd F$ and $N\rhd X$. There are no triple intersections. (It helps to first observe that all direct formulas are affirmative and that all cross formulas are negative.) Moreover, counting those intersections is easy, and we have already done all the necessary calculations.
\begin{itemize}
\item Intersecting $\bot\rhd F$ and $F\rhd A$ we get $\bot\rhd A$. Counting those is the same as counting $A\rhd\bot$, which we have already done:
$$ \lambda(\bot\rhd F\ \textrm{and}\ F\rhd A)=\lambda(A\rhd\bot), \quad \mu(\bot\rhd F\ \textrm{and}\ F\rhd A)=\mu(A\rhd\bot). $$

\item Intersecting $\bot\rhd F$ and $N\rhd X$ we get $\bot\rhd X$. Counting those is the same as counting $X\rhd\bot$, so
$$ \lambda(\bot\rhd F\ \textrm{and}\ N\rhd X)=\lambda(X\rhd\bot), \quad \mu(\bot\rhd F\ \textrm{and}\ N\rhd X)=\mu(X\rhd\bot). $$
\end{itemize}

Now we can enumerate the basic locally \GL-like formulas by subtracting the formulas we counted twice (i.e.\@ using the so-called \emph{inclusion-exclusion principle}), and in doing so we can even cancel out some terms. This finally leads us to the following quantities:
\begin{align*}
\lambda_B & = \param{basic\ locally\ \GL-like} = \lambda(\bot\rhd F) + \lambda(F\rhd A) + \lambda(N\rhd X) \\
& \,= \frac{8\lambda_N^2 - 17\lambda_N + 6}{64} = 0.05892\ldots, \\
\mu_B & = \share{basic\ locally\ \GL-like} = \mu(\bot\rhd F) + \mu(F\rhd A) + \mu(N\rhd X) \\
& \,= \frac{16\lambda_N \mu_N - 64\lambda_N - 17\mu_N + 40}{64} = 0.45321\ldots.
\end{align*}

\emph{Locally \GL-like formulas}.
Now we tackle all locally \GL-like formulas $L$, which are given by the productions
$$ L \,::=\, \bot \ | \ B \ | \ L \rightarrow L. $$
Once again, Theorem~\ref{theorem01}(b) is tailored for this situation and we need to choose
$$ p'=0,\ q'=1,\ q''=0,\ U=\{\bot\}\cup B,\ V=\emptyset $$
and observe $\lambda_U=\lambda_B+r$, $\mu_U=\mu_B$ in order to evaluate
\begin{align*}
\lambda_L & = \frac{2(\lambda_B+r)}{1+\sqrt{1-4(\lambda_B+r)}} = 0.24294\ldots, \\
\mu_L & = \frac{\mu_B}{1-2\lambda_L} = 0.88155\ldots.
\end{align*}

\emph{\GL-like formulas}.
So far we know that more than $88\%$ of \ILC\ formulas, in the sense we described, have normal forms from \GL. If we want to calculate the share of general \GL-like formulas, as they are defined in Section~\ref{sectiondefinitions}, we have to consider a few more cases and use the principle of inclusion and exclusion once again. Fortunately, many of the intersecting cases are either empty or already covered. For example, there are no formulas that are both negative and direct, since the main connective is different. For the same reason no formula can be both negative and of the form $(G\rightarrow\bot)\rhd\bot$.

As the first instance of an already covered case we will investigate locally \GL-like formulas that are also of the form $(G\rightarrow\bot)\rhd\bot$ for some smaller \GL-like formula $G$. First, we realize that we only need to consider basic locally \GL-like formulas, since other locally \GL-like formulas do not  have $\rhd$ as the main connective. Then, of those, we only need to consider $A\rhd\bot$ and $X\rhd\bot$ forms, since other forms either have $\bot$ on the left, which is not a conditional, or have $A$ or $X$ on the right, which cannot be $\bot$.

Thus, we have reduced the problem to finding formulas that are either affirmative or cross, and at the same time of the form $G\rightarrow\bot$, and we append ``$\rhd\bot$'' at the end. Cross formulas are already of the form $M\rightarrow\bot$, and we know that direct formulas are \GL-like, so all of them are counted. Looking at the possible forms for affirmative formulas, we find that they can only be of the form $N\rightarrow\bot$. Summing it up, we see that the desired formulas are either of the form $(N\rightarrow\bot)\rhd\bot$, or of the form $(M\rightarrow\bot)\rhd\bot$. The number of such formulas with complexity $n$ is $\#\mathcal{N}_{n-2}+\#\mathcal{M}_{n-2}$.

Similarly, locally \GL-like formulas that are also direct can only belong to the basic locally \GL-like class $A\rhd \bot$, and there are $\#\mathcal{F}_{n-1}-\#\mathcal{N}_{n-1}$ such formulas. Next, the only direct formulas of the form $(G\rightarrow\bot)\rhd\bot$ are $(N\rightarrow\bot)\rhd\bot$ for some negative $N$, and there are $\#\mathcal{N}_{n-2}$ of them.

In fact, the only case left to do is formulas that are both locally \GL-like and negative; let us denote those formulas by $P$. These are either $\bot$ or formed as $A\rightarrow N$, where both $A$ and $N$ are locally \GL-like, so the production rules are
$$ P \,::=\, \bot \ | \ (L\setminus P) \rightarrow P. $$
With the choice of parameters
$$ p'=0,\ q'=0,\ q''=1,\ U=\{\bot\},\ V=L $$
Theorem~\ref{theorem01}(b) gives
\begin{align*}
\lambda_P & = \frac{2r}{1-\lambda_L + \sqrt{(1-\lambda_L)^2 + 4r}} = 0.13943\ldots, \\
\mu_P & = \frac{\lambda_P \mu_L}{1-\lambda_L+2\lambda_P} = 0.11865\ldots.
\end{align*}

The only nonempty triple intersection consists of formulas that are locally \GL-like, direct, and of the form $(G\rightarrow\bot)\rhd\bot$. We have already observed that such formulas can only be $(N\rightarrow\bot)\rhd\bot$ and their number is $\#\mathcal{N}_{n-2}$. We will need to add this number back, due to undercounting.

Observe that we can write
$$ G \,::=\, Q \ | \ (G\rightarrow\bot)\rhd\bot, $$
for some class of formulas $Q$ such that the above two productions yield disjoint sets of formulas of complexity $n$. We are ready to apply the inclusion-exclusion principle:
\begin{align*}
\#\mathcal{Q}_n & = \#\mathcal{L}_n + \#\mathcal{N}_n + \#\mathcal{M}_n - (\#\mathcal{N}_{n-2}+\mathcal{M}_{n-2}) \\
& \quad - (\#\mathcal{F}_{n-1}-\#\mathcal{N}_{n-1}) - \#\mathcal{N}_{n-2} - \#\mathcal{P}_n + \#\mathcal{N}_{n-2}
\end{align*}
for $n\geq 2$, so by repeated applications of Theorem~\ref{theorem01}, we easily derive
\begin{align*}
\mu_Q & = \mu_L + \mu_N + \mu_M - (r^2\mu_N+r^2\mu_M) - (r\mu_F-r\mu_N) - \mu_P \\
& = 0.92305\ldots.
\end{align*}
Finally, Theorem~\ref{theorem01}(d) with
$$ p'=0,\ q'=1,\ U=Q $$
concludes
$$ \mu_G = \frac{\mu_Q}{1-r^2} = 0.93771\ldots. $$
This completes the proof of Part (a). The ``counting procedure'' for all of the mentioned classes of \ILC\ formulas can be summarized in the following table.
\begin{center}
{\small\begin{tabular}{|c|l|l|r|}
\hline
letter & class & grammar rules & $\mu$ \\ \hline
$F$ & all \ILC\ formulas & $\bot\;|\;F\rightarrow F\;|\;F\rhd F$ & $1$ \\
$N$ & negative & $\bot\;|\;A\rightarrow N$ & $\approx0.14$ \\
$A$ & affirmative & $N\rightarrow F\;|\;F\rightarrow A\;|\;F\rhd F$ & $\approx0.86$ \\
$M$ & direct & $A\rhd N$ & $\approx0.14$ \\
$X$ & cross & $M\rightarrow\bot$ & $\approx0.02$ \\
$B$ & basic locally \GL-like & $\bot\rhd F\;|\;F\rhd A\;|\;A\rhd\bot\;|\;N\rhd X\;|\;X\rhd\bot$ & $\approx0.45$ \\
$L$ & locally \GL-like & $\bot\;|\;B\;|\;L\rightarrow L$ & $\approx0.88$ \\
$P$ & negative locally \GL-like & both $L$ and $N$ & $\approx0.12$ \\
$G$ & \GL-like & $L\;|\;N\;|\;M\;|\;(G\rightarrow\bot)\rhd\bot$ & $\approx0.94$ \\ \hline
\end{tabular}}
\end{center}

The exact numerical expression for $\mu_G$ can be obtained as an output of Mathematica \cite{Mathematica} using the command Simplify,
\begin{align*}
\mu_G & = {\scriptsize\textrm{$\frac{2}{63} \left(\!\rule{0mm}{8mm}-4-\frac{67}{34} \left(-17+3 \sqrt{17}\right)-\frac{2}{17} \left(-1683+169 \sqrt{17}\right)\sqrt{\frac{2}{-61+23 \sqrt{17}}}\right.$}} \\[-1mm]
& {\scriptsize\textrm{$\left.+\frac{\left(-1683+169
\sqrt{17}\right) \left(-16-\sqrt{-122+46 \sqrt{17}}+\sqrt{646+46 \sqrt{17}+32 \sqrt{-122+46 \sqrt{17}}}\right)}{17 \sqrt{-10710+6026 \sqrt{17}+368
\sqrt{34 \left(-61+23 \sqrt{17}\right)}-976 \sqrt{-122+46 \sqrt{17}}}}\!\right)$}}.
\end{align*}
We envisage no practical use of this radical expression, but we included it in order to emphasize that our techniques are exact.

\smallskip
(b) Once we add $\Box$ to the alphabet, the basic parameters change to $p=1$, $q=2$, $s=1$. The calculation is very similar; we only provide the table of approximate shares.
\begin{center}
{\small\begin{tabular}{|c|l|l|r|}
\hline
letter & class & grammar rules & $\mu$ \\ \hline
$F$ & all \ILC\ formulas & $\bot\;|\;\Box F\;|\;F\rightarrow F\;|\;F\rhd F$ & $1$ \\
$N$ & negative & $\bot\;|\;A\rightarrow N$ & $\approx0.11$ \\
$A$ & affirmative & $\Box F\;|\;N\rightarrow F\;|\;F\rightarrow A\;|\;F\rhd F$ & $\approx0.89$ \\
$M$ & direct & $\Box N\;|\;A\rhd N$ & $\approx0.12$ \\
$X$ & cross & $M\rightarrow\bot$ & $\approx0.01$ \\
$B$ & basic locally \GL-like & $\Box N\;|\;\Box M\;|$ &\\ & & $\bot\rhd F\;|\;F\rhd A\;|\;A\rhd\bot\;|\;N\rhd X\;|\;X\rhd\bot$ & $\approx0.44$ \\
$L$ & locally \GL-like & $\bot\;|\;B\;|\;L\rightarrow L$ & $\approx0.74$ \\
$P$ & negative locally \GL-like & both $L$ and $N$ & $\approx0.08$ \\
$G$ & \GL-like & $L\;|\;N\;|\;M\;|\;\Box G\;|\;(G\rightarrow\bot)\rhd\bot$ & $\approx0.87$ \\ \hline
\end{tabular}}
\end{center}

\smallskip
(c) If we also allow $\top$ as a primitive symbol, then we have $p=1$, $q=2$, $s=2$, and the table becomes:
\begin{center}
{\small\begin{tabular}{|c|l|l|r|}
\hline
letter & class & grammar rules & $\mu$ \\ \hline
$F$ & all \ILC\ formulas & $\bot\;|\;\top\;|\;\Box F\;|\;F\rightarrow F\;|\;F\rhd F$ & $1$ \\
$N$ & negative & $\bot\;|\;A\rightarrow N$ & $\approx0.07$ \\
$A$ & affirmative & $\top\;|\;\Box F\;|\;N\rightarrow F\;|\;F\rightarrow A\;|\;F\rhd F$ & $\approx0.93$ \\
$M$ & direct & $\Box N\;|\;A\rhd N$ & $\approx0.08$ \\
$X$ & cross & $M\rightarrow\bot$ & $\approx0.004$ \\
$B$ & basic locally \GL-like & $\Box N\;|\;\Box M\;|$ &\\ & & $\bot\rhd F\;|\;F\rhd A\;|\;A\rhd\bot\;|\;N\rhd X\;|\;X\rhd\bot$ & $\approx0.45$ \\
$L$ & locally \GL-like & $\bot\;|\;\top\;|\;B\;|\;F\rightarrow\top\;|\;L\rightarrow L$ & $\approx0.82$ \\
$P$ & negative locally \GL-like & both $L$ and $N$ & $\approx0.06$ \\
$G$ & \GL-like & $L\;|\;N\;|\;M\;|\;\Box G\;|\;(G\rightarrow\bot)\rhd\bot\;|\;\top\rightarrow G$ & $\approx0.91$ \\ \hline
\end{tabular}}
\end{center}

\end{proof}

\smallskip
\section{Acknowledgements}
The authors wish to thank Joost J. Joosten for stating a question (in private correspondence) that inspired our efforts and for many useful remarks that improved the article. We are also grateful to Neil J. A. Sloane for correcting the reference to sequence A055113. Finally, we need to thank the anonymous referee for suggesting structural reorganization of the presented material.
\smallskip

\end{document}